\documentclass[12pt]{article}
\usepackage[dvips]{graphics}
\usepackage{bm,enumerate,amscd,amssymb,amsthm,amsmath,subfigure,graphicx,youngtab,young}
\usepackage{dsfont,pgfkeys,pgfopts}
\usepackage{pifont}
\usepackage{amsfonts}
\usepackage{mathrsfs}

\def\min{\mbox{\rm min}}

\def\sh{\mbox{\rm sh}}

\def\And{\mbox{\rm ~and~}}

\def\If{\mbox{\rm ~if~}}

\def\type{\mbox{\rm type}}

\def\({\left(}\def\){\right)}

\def\sh{\mbox{\rm sh}}\def\type{\mbox{\rm type}}

\theoremstyle{plain}
\newtheorem{theorem}{Theorem}[section]
\newtheorem{lemma}[theorem]{Lemma}

\newtheorem{example}[theorem]{Example}
\newtheorem{algorithm}[theorem]{Algorithm}
\newtheorem{definition}[theorem]{Definition}

\newtheorem{corollary}[theorem]{Corollary}

\newtheorem{row bumping lemma}[theorem]{Row Bumping Lemma}

\linespread{1.1}

\setlength{\textheight}{8.7in}

\addtolength{\hoffset}{-10mm}
\addtolength{\voffset}{-15mm}
\addtolength{\textwidth}{20mm}
\begin{document}
\small
\normalsize
\title{Pattern Avoidance of Generalized Permutations}
\author
{
\small Zhousheng Mei, Suijie Wang\\%\thanks{Supported by NSFC 11401196,11571097}\\
\small College of Mathematics and Econometrics\\
\small Hunan University, Changsha, China.\\
\small\tt zhousheng@hnu.edu.cn, wangsuijie@hnu.edu.cn
}
\date{}
\maketitle

\begin{abstract}
In this paper, we study pattern avoidances of generalized permutations and show that the number of all generalized permutations avoiding $\pi$ is independent of the choice of $\pi\in S_3$, which extends the classic results on permutations avoiding  $\pi\in S_3$. Extending both Dyck path and Riordan path, we introduce the Catalan-Riordan path which turns out to be a combinatorial interpretation of the difference array of Catalan numbers. As applications, we interpret both Motzkin and Riordan numbers in two ways, via semistandard Young tableaux of two rows and generalized permutations avoiding $\pi \in S_3$. Analogous to Lewis's method, we establish a bijection from generalized permutations to rectangular semistandard Young tableaux which will recover several known results in the literature. \vspace{2ex}\\
\noindent{\small {\bf Keywords:} Generalized permutations, Pattern avoidances, Motzkin Numbers, Riordan numbers, Young tableaux, RSK correspondence}
\end{abstract}
\section{Introduction}
A composition of a positive integer $N$ is a  sequence $\alpha=(\alpha_{1},\alpha_{2},\ldots,\alpha_{n})$ of nonnegative integers such that $\alpha_{1}+\cdots+\alpha_{n}=N$. Denote by $[\alpha]=\{1^{\alpha_{1}},2^{\alpha_{2}},\ldots, n^{\alpha_{n}}\}$ the multiset with multiplicity $\alpha_i$ of $i\in [n]$. Given two compositions $\alpha$ and $\beta$ of $N$, a {\em generalized permutation} of  $\alpha\to\beta$ is a one-to-one correspondence from multisets $[\alpha]$ to $[\beta]$. Denote by $S_{\alpha\beta}$ the set of all generalized permutations of  $\alpha\to\beta$. Each generalized permutation $\tau\in S_{\alpha\beta}$ can be written uniquely in two-rowed array
\[
\tau=\left(\begin{array}{cccccccccc}1&\cdots&1&2&\cdots&2&\cdots&n&\cdots&n\\ \tau_{11}&\cdots&\tau_{1\alpha_{1}}&\tau_{21}&\cdots&\tau_{2\alpha_{2}}&\cdots&\tau_{n1}&\ldots&\tau_{n
\alpha_{n}}\\
\end{array}\right)
\]
such that $\tau_{i1}\leq \tau_{i2}\leq\cdots\leq \tau_{i\alpha_{i}}$ for all $1\leq i\leq n$ and $\{\tau_{11},\tau_{12},\ldots,\tau_{n\alpha_{n}}\}=[\beta]$ as multisets.
Without causing confusions, we abbreviate $\tau=\tau_{11}\tau_{12}\ldots\tau_{n\alpha_{n}}$ and call the subsequence $\tau(i)=\tau_{i1}\tau_{i2}\ldots\tau_{i\alpha_{i}}$ the $i$-th block of $\tau$.
For convenience, write continuous repetitions of equal elements as exponential notations, e.g., the composition $(1,2,2,2,1)$ is abbreviated $(1,2^3,1)$. When $\alpha=\beta=( 1^{n})$, $S_{(1^n)(1^n)}=S_{n}$ is the set of all permutations on $[n]=\{1,2,\ldots,n\}$. As classical counting problems, pattern avoidances of permutations have been well studied, see \cite{MacMahon,Simion,West1990} etc.. In this paper, we consider the pattern avoidances of generalized permutations.
\begin{definition}
For $\tau=\tau_{11}\tau_{12}\ldots\tau_{n\alpha_{n}}\in S_{\alpha\beta}$ and $\pi\in S_m$, $\tau$ avoids the pattern $\pi$ if no index sets $1\leq i_{1}<i_{2}<\cdots<i_{m}\leq n$ and $j_{1},j_{2},\ldots ,j_{m}$ satisfy that the subsequence $\tau_{i_{1}j_{1}}\tau_{i_{2}j_{2}}\ldots\tau_{i_{m}j_{m}}$ is order isomorphic to $\pi$, i.e., $\tau_{i_{s}j_{s}}<\tau_{i_{t}j_{t}}$ iff $\pi_{s}<\pi_{t}$. Let $S_{\alpha\beta}(\pi)$ denote the set of generalized permutations in $S_{\alpha\beta}$ which avoid the pattern $\pi$.
\end{definition}

For example, for $\alpha=(1,2,2,1), \beta=(3,2,1)$, and ${\tiny\tau=\left(\begin{array}{cccccc}1&2&2&3&3&4\\ 2&1&3&1&2&1\\ \end{array}\right)\in S_{\alpha\beta}}$, we have
\[
\tau \in S_{\alpha\beta}(132)\quad \And \quad\tau\notin S_{\alpha\beta}(231),
\]
since the subsequence ${\tiny\left(\begin{array}{ccc}1&2&3 \\2&3&1\\\end{array}\right)}$ of $\tau$ is of pattern $231$. In 1973, Knuth \cite{Knuth1973} established a celebrated result on pattern avoidances of permutations.
\begin{theorem}{\em \cite{Knuth1973}} The cardinality
\[
|S_{( 1^{n})( 1^{n})}(\pi)|=C_{n}=\frac{1}{n+1}{2n\choose n}
\]
is independent of the choice of $\pi\in S_{3}$, where $C_{n}$ is the $n$-th Catalan number.
\end{theorem}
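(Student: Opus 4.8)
The plan is to split the six patterns in $S_{3}$ into symmetry classes, compute one representative of each class directly, and then observe that the resulting counts coincide. First I would introduce the three standard involutions on $S_{n}$: the reverse $\sigma\mapsto\sigma^{r}$ with $\sigma^{r}_{i}=\sigma_{n+1-i}$, the complement $\sigma\mapsto\sigma^{c}$ with $\sigma^{c}_{i}=n+1-\sigma_{i}$, and the group inverse $\sigma\mapsto\sigma^{-1}$. Each is a bijection of $S_{n}$ carrying $S_{n}(\pi)$ onto $S_{n}(\pi^{r})$, $S_{n}(\pi^{c})$, and $S_{n}(\pi^{-1})$ respectively, since order-isomorphism of a subsequence is preserved under each operation once the pattern is transformed correspondingly. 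Hence $|S_{n}(\pi)|$ depends only on the orbit of $\pi$ under the group these generate, and a direct check shows there are exactly two orbits, $\{123,321\}$ and $\{132,213,231,312\}$. This reduces the theorem to evaluating $|S_{n}(132)|$ and $|S_{n}(321)|$.

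For the orbit of $132$, I would count $|S_{n}(132)|$ by a recursive decomposition on the position of the largest entry. Writing $\sigma_{k}=n$, any entry left of position $k$ together with $n$ and any entry to its right forms a $132$ pattern unless every entry left of $n$ exceeds every entry right of $n$; so the entries left of $n$ are exactly $\{n-k+1,\dots,n-1\}$ and those to the right are $\{1,\dots,n-k\}$, each block being an arbitrary $132$-avoider on its own values. This yields $|S_{n}(132)|=\sum_{k=1}^{n}|S_{k-1}(132)|\,|S_{n-k}(132)|$ with $|S_{0}(132)|=1$, which is precisely the Catalan recurrence, so $|S_{n}(132)|=C_{n}$.

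For the orbit of $321$, I would invoke the RSK correspondence, under which $\sigma\in S_{n}$ maps to a pair $(P,Q)$ of standard Young tableaux of a common shape $\lambda\vdash n$, with the length of the longest decreasing subsequence of $\sigma$ equal to the number of rows of $\lambda$. Since avoiding $321$ means having no decreasing subsequence of length three, i.e.\ $\lambda$ has at most two rows, the count becomes $|S_{n}(321)|=\sum_{\lambda\vdash n,\ \ell(\lambda)\le 2}(f^{\lambda})^{2}$, where $f^{\lambda}$ is the number of standard Young tableaux of shape $\lambda$. Evaluating this sum over two-row shapes (for example via the hook-length formula and a ballot-type telescoping) gives $C_{n}$, finishing the second representative; note that $|S_{n}(123)|=C_{n}$ then comes for free, as $123$ lies in the same orbit as $321$.

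The step I expect to be the main obstacle is that the elementary involutions never connect $\{123,321\}$ with $\{132,213,231,312\}$, so one genuinely needs an independent argument that the two orbit-counts agree. I would resolve this by the two separate evaluations above both landing on $C_{n}$; alternatively one could build an explicit bijection between $S_{n}(123)$ and $S_{n}(132)$, but verifying that such a map preserves avoidance in both directions is the delicate part and is what I would expect to consume most of the effort.
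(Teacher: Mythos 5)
Your proposal is a correct and complete proof of the classical statement, but note that the paper itself offers no proof of this theorem: it is quoted from Knuth and then reproved implicitly as the special case $\alpha=\beta=(1^{n})$ of Theorem 1.4, whose proof occupies Section 2. Comparing your route with that one: both start by cutting the six patterns down to two classes via symmetries and both handle the $321$ class through RSK restricted to shapes with at most two rows (the paper packages the two-row tableau pair into a single rectangular SSYT via the $\boxplus$ operation, giving the Kostka number $K_{(N,N)(\alpha,\beta)}$, which for $\alpha=\beta=(1^{n})$ is $C_{n}$). Where you diverge is the bridge between the two symmetry classes: you evaluate each class independently --- the Catalan recurrence from the position of $n$ for $132$, and the identity $\sum_{\ell(\lambda)\le 2}(f^{\lambda})^{2}=C_{n}$ for $321$ --- and observe the answers agree, whereas the paper's Lemma 2.5 shows directly that $123$-avoiders and $213$-avoiders satisfy one and the same recursion. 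The paper's device is forced on it by the multiset setting, where no closed form is available for either class separately and where the inverse map you use is unavailable (it swaps $\alpha$ and $\beta$); your direct evaluations are cleaner for the $S_{n}$ case but do not generalize. Two small points to tighten: the identity $\sum_{k}\bigl(\binom{n}{k}-\binom{n}{k-1}\bigr)^{2}=C_{n}$ that closes your $321$ computation is asserted rather than derived, and you should say explicitly that the block to the left of $n$ in the $132$ decomposition must itself avoid $132$ \emph{and} that no pattern can straddle the two blocks because every left entry exceeds every right entry --- both are true and easy, but they are the load-bearing steps of the recurrence.
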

In 1985, Simion and Schmidt \cite{Simion} determined the number $|S_{(1^{n})(1^{n})}(T)|$ of permutations simultaneously avoiding any given set $T$ of patterns in $S_{3}$. In 2001, Albert, Aldred, Atkinson, Handley, and Holton \cite{Albert} calculated $|S_{( 1^{n})\beta}(T)|$ for any subset $T\subseteq S_{3}$. In 2006, using the results in \cite{Albert,Atkinson}, Savage and Wilf \cite{Savage} observed that
\begin{theorem}\label{Savage}{\em \cite{Savage}} If $\beta=(\beta_1,\ldots,\beta_m)$ is a composition of $N$, then the cardinality of $S_{( 1^{N})\beta}(\pi)$ is independent of the choice of $\pi\in S_{3}$ and the order of entries of $\beta$.
\end{theorem}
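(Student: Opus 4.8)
\emph{Proof proposal.} The plan is to pass from two-rowed arrays to words and then exploit the RSK correspondence together with the symmetries of pattern containment. Since $\alpha=(1^{N})$, every block of $\tau\in S_{(1^{N})\beta}$ has length one, so $\tau$ is determined by its bottom row $w=w_{1}w_{2}\cdots w_{N}$, a word over $[m]$ in which the letter $j$ occurs exactly $\beta_{j}$ times; thus $S_{(1^{N})\beta}$ is identified with the rearrangements of the multiset $[\beta]$, and, because the three entries of a pattern in $S_{3}$ are distinct, $\tau$ contains $\pi$ exactly when $w$ contains $\pi$ as a strict pattern. Two involutions act here: reversal $w\mapsto w^{r}$, which preserves the content $\beta$ and sends occurrences of $\pi_{1}\pi_{2}\pi_{3}$ to occurrences of $\pi_{3}\pi_{2}\pi_{1}$; and complementation $w\mapsto\bar w$ with $\bar w_{i}=m+1-w_{i}$, which reverses the content to $\beta^{r}=(\beta_{m},\dots,\beta_{1})$ and sends $\pi$ to its complement. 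For every fixed $\beta$, reversal gives $|S_{(1^{N})\beta}(123)|=|S_{(1^{N})\beta}(321)|$, $|S_{(1^{N})\beta}(132)|=|S_{(1^{N})\beta}(231)|$ and $|S_{(1^{N})\beta}(213)|=|S_{(1^{N})\beta}(312)|$, so it remains to settle the order-independence in $\beta$ and to bridge the two classes $\{123,321\}$ and $\{132,213,231,312\}$.

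First I would dispose of the class $\{123,321\}$ by RSK. Applying the RSK correspondence to $w$ yields a pair $(P,Q)$ in which $P$ is a semistandard tableau of content $\beta$ and $Q$ is a standard tableau of the same shape $\lambda\vdash N$ (the top row $(1^{N})$ forces $Q$ to be standard). By the Schensted--Greene theorem the length of the longest strictly decreasing subsequence of $w$ equals the number of rows of $\lambda$, so $w$ avoids $321$ if and only if $\ell(\lambda)\le 2$. Hence $|S_{(1^{N})\beta}(321)|=\sum_{\lambda\vdash N,\ \ell(\lambda)\le 2} f^{\lambda}K_{\lambda\beta}$, where $f^{\lambda}$ counts standard tableaux and $K_{\lambda\beta}$ is the Kostka number. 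Because $K_{\lambda\beta}$ is invariant under permuting the parts of $\beta$, this count is independent of the order of $\beta$; by the reversal identity the same formula and symmetry hold for $123$, which proves both assertions of the theorem for $\{123,321\}$.

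What remains, and what I expect to be the main obstacle, is the single cross-class equality $|S_{(1^{N})\beta}(132)|=|S_{(1^{N})\beta}(321)|$ for every $\beta$. I would establish it by constructing a content-preserving bijection $\Phi_{\beta}\colon S_{(1^{N})\beta}(132)\to S_{(1^{N})\beta}(321)$, a refinement to multiset permutations of prescribed content of the classical Wilf-equivalence between $132$- and $321$-avoiding permutations (Simion--Schmidt, West, Krattenthaler). The difficulty is exactly the content constraint: the standard bijections rearrange values and need not fix the multiset of letters, whereas $\Phi_{\beta}$ is only allowed to permute positions. I expect to build $\Phi_{\beta}$ from the recursive ``everything before the maximal letter exceeds everything after'' structure of $132$-avoiders and the ``union of two weakly increasing subsequences'' (Dilworth) structure of $321$-avoiders, matching them while tracking how many copies of each letter are consumed. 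Once $\Phi_{\beta}$ is in hand, the $132$-count equals the symmetric $321$-count and is therefore order-independent; complementation then transfers this equality and its $\beta$-symmetry to $312$, and reversal to $231$ and $213$, so that all six cardinalities coincide and all are symmetric in $\beta$. Verifying that $\Phi_{\beta}$ simultaneously preserves content and interchanges the two avoidance conditions is the heart of the argument and the step I expect to be hardest.
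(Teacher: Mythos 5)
Your reduction to words, the reversal and complementation symmetries, and the RSK computation of $|S_{(1^{N})\beta}(321)|=\sum_{\lambda\vdash N,\,\ell(\lambda)\le 2}f^{\lambda}K_{\lambda\beta}$ are all correct, and they do settle both claims of the theorem for the class $\{123,321\}$ and reduce everything else to a single cross-class equality. But that equality is exactly where the proof stops: you announce a content-preserving bijection $\Phi_{\beta}\colon S_{(1^{N})\beta}(132)\to S_{(1^{N})\beta}(321)$, describe the structures you hope to match, and then defer its construction and verification, explicitly calling it ``the heart of the argument and the step I expect to be hardest.'' That is a genuine gap, not a routine detail. The classical Simion--Schmidt and Krattenthaler bijections do not respect the content constraint, and making them do so for arbitrary multiplicities $\beta$ is precisely the nontrivial content of Myers's bijective proof of this theorem; nothing in your sketch shows that the matching of ``prefix above suffix'' decompositions with Dilworth decompositions can be carried out consistently with the letter multiplicities. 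As written, the argument proves only that all six counts fall into at most two symmetric-in-$\beta$ values, not that they coincide.

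For comparison, the paper does not attack this bridge bijectively at all. It proves the more general Theorem \ref{main-1} and crosses between the two Wilf classes via Lemma \ref{key-lemma}: both $|S_{\alpha\beta}(123)|$ and $|S_{\alpha\beta}(213)|$ are shown to satisfy the same recursion (\ref{213-equation}), obtained by conditioning on the first block whose last entry equals the maximal letter $m$ and deleting that occurrence, with an inclusion--exclusion correction; induction on $N$ then forces the two counts to agree. If you want to complete your proof along your own lines you must either actually construct and verify $\Phi_{\beta}$ (essentially reproving Myers's result), or replace that step with a recursion or generating-function identity of the kind the paper uses.
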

In 2007, Myers \cite{Myers} gave a bijection proof on Theorem \ref{Savage} by extending the construction of Simion and Schmidt \cite{Simion} for permutations.  Let $\lambda$ be a partition of $N$ and $\mu$ a composition of $N$. The number $K_{\lambda\mu}$ of SSYTs (semistandard Young tableaux) of shape $\lambda$ and type $\mu$ is called the Kostka number, which is independent of the order of entries of $\mu$ and Schur-concave on $\mu$, i.e., $K_{\lambda\mu}\le K_{\lambda\mu'}$ if $\mu$ dominates $\mu'$ for any two partitions $\mu$ and $\mu'$. Below we consider generalized permutations avoiding $\pi\in S_3$ and obtain a parallel result as above, whose proof is given in Section 2.
\begin{theorem}\label{main-1}
Let $\alpha=(\alpha_{1},\ldots,\alpha_{n})$ and $\beta=(\beta_{1},\ldots,\beta_{m})$ be two compositions of $N$ and denote $(\alpha,\beta)=(\alpha_{1},\ldots,\alpha_{n},\beta_{1},\ldots,\beta_{m})$. The cardinality of $S_{\alpha\beta}(\pi)$ is the Kostka number $K_{(N,N)(\alpha,\beta)}$, i.e.,
\[
|S_{\alpha\beta}(\pi)|=|S_{\beta\alpha}(\pi)|=K_{( N,N)(\alpha,\beta)}
\]
which is independent of the choice of $\pi\in S_{3}$ and the order of entries of $\alpha$ and $\beta$, and Schur-concave on both $\alpha$ and $\beta$ if $\alpha$ and $\beta$ are partitions.
\end{theorem}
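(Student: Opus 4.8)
The plan is to route everything through the RSK correspondence, turning pattern avoidance into a restriction on the RSK shape and then counting by a direct dissection of the rectangular tableau. First I would identify $\tau\in S_{\alpha\beta}$ with the nonnegative integer matrix $M=(M_{ij})$ whose $(i,j)$ entry is the multiplicity of $j$ in the $i$-th block, so $M$ has row sums $\alpha$ and column sums $\beta$; since the blocks are sorted, $\tau\mapsto M$ is a bijection, and an occurrence of a pattern $\pi\in S_3$ corresponds to three cells of $M$ in distinct rows and distinct columns carrying positive entries whose row order and value order match $\pi$. Applying RSK in its generalized (matrix) form gives a bijection $\tau\mapsto(P,Q)$ onto $\bigsqcup_{\lambda\vdash N}\mathrm{SSYT}(\lambda,\beta)\times\mathrm{SSYT}(\lambda,\alpha)$, where $P$ has content $\beta$ and $Q$ has content $\alpha$. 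By Schensted's theorem in its biword form (Greene's theorem), the number of rows of the common shape $\lambda$ equals the length of a longest strictly decreasing subsequence drawn from distinct blocks, which is exactly a longest chain realizing the pattern $321$. Hence $\tau$ avoids $321$ iff $\ell(\lambda)\le 2$, and RSK restricts to a bijection $S_{\alpha\beta}(321)\to\bigsqcup_{\ell(\lambda)\le 2}\mathrm{SSYT}(\lambda,\beta)\times\mathrm{SSYT}(\lambda,\alpha)$.

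The counting identity I then want is $\sum_{\lambda\vdash N,\ \ell(\lambda)\le 2}K_{\lambda\alpha}K_{\lambda\beta}=K_{(N,N)(\alpha,\beta)}$, which I would prove bijectively by dissecting a rectangular tableau. Given an SSYT $T$ of shape $(N,N)$ with content $(\alpha,\beta)$ — entries $1,\dots,n$ carrying content $\alpha$ and $n+1,\dots,n+m$ carrying content $\beta$ — the cells holding a value $\le n$ form, by row-weakness and column-strictness, a Young subdiagram $\lambda=(\lambda_1,\lambda_2)$ with $\lambda_1+\lambda_2=N$, whose entries constitute an SSYT $Q$ of shape $\lambda$ and content $\alpha$. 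The complementary cells form the skew shape $(N,N)/\lambda$; because $\lambda_1+\lambda_2=N$ this skew shape is a $180^\circ$ rotation of the straight shape $\lambda$, so rotating it and complementing the alphabet $n+1,\dots,n+m$ turns the large entries into an SSYT $P$ of shape $\lambda$ and content $\beta$. This dissection is reversible, yielding the identity and, composed with RSK, an explicit bijection $S_{\alpha\beta}(321)\to\mathrm{SSYT}((N,N),(\alpha,\beta))$, so $|S_{\alpha\beta}(321)|=K_{(N,N)(\alpha,\beta)}$.

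To reach an arbitrary $\pi\in S_3$ I would use the three involutions on $M$: transposition $M\mapsto M^{\mathsf T}$ (realizing $\tau\mapsto\tau^{-1}$, sending $S_{\alpha\beta}\to S_{\beta\alpha}$ and $\pi\mapsto\pi^{-1}$), column reversal $M_{ij}\mapsto M_{i,m+1-j}$ (complementation, sending $\pi\mapsto\pi^{c}$ and $\beta$ to its reverse), and row reversal (sending $\pi\mapsto\pi^{r}$ and $\alpha$ to its reverse). Since $K_{(N,N)\mu}$ is unchanged under reordering the content $\mu$, each involution transports the result from the transformed pattern to $\pi$; in particular complementation immediately upgrades the $321$ case to the $123$ case, and transposition together with $K_{(N,N)(\alpha,\beta)}=K_{(N,N)(\beta,\alpha)}$ gives $|S_{\alpha\beta}(\pi)|=|S_{\beta\alpha}(\pi)|$. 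These symmetries act on $S_3$ with two orbits, $\{123,321\}$ and $\{132,213,231,312\}$, so after the $321$ argument it remains only to settle one representative of the second orbit.

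The main obstacle is precisely this second orbit: unlike $123$ and $321$, avoidance of $213$ (or $132,231,312$) is not detected by the RSK shape, so the dissection argument does not apply directly. Here I would construct a bijection between $S_{\alpha\beta}(213)$ and $S_{\alpha\beta}(321)$ generalizing the Simion–Schmidt map and its one-row extension by Myers: reading the blocks left to right, freeze the positions of the left-to-right minima of the bottom word and refill the remaining cells by the unique greedy choice compatible with the prescribed column sums $\beta$. The crux, and the genuinely new difficulty beyond Myers, is that now \emph{both} rows of $\tau$ are multisets, so the notions of left-to-right minima and of the greedy refilling must be defined so as to respect the weakly increasing blocks and to preserve the content $\beta$ exactly; proving that this map is well defined and invertible is where the real work lies (a viable alternative, in the spirit of Lewis's method, is a modified insertion sending these avoiders straight to rectangular tableaux). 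Once one representative is handled, the symmetries above finish all of $S_3$, and the independence of $|S_{\alpha\beta}(\pi)|$ from the order of the entries of $\alpha$ and of $\beta$, together with its Schur-concavity in $\alpha$ and in $\beta$ when these are partitions, is then inherited verbatim from the corresponding properties of $K_{(N,N)(\alpha,\beta)}$ recalled in the excerpt.
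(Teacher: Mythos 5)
Your reduction framework is exactly the paper's: RSK sends $S_{\alpha\beta}(321)$ to pairs of SSYTs of a common shape with at most two rows, the rotate-complement-and-join dissection of a $(N,N)$ rectangle gives $\sum_{\ell(\lambda)\le 2}K_{\lambda\alpha}K_{\lambda\beta}=K_{(N,N)(\alpha,\beta)}$ (the paper's $P\boxplus Q$ construction and Lemma 2.4), and the reversal/complement/inverse symmetries reduce all of $S_3$ to one representative from each of the two orbits $\{123,321\}$ and $\{132,213,231,312\}$ (the paper's Lemma 2.3). The order-independence and Schur-concavity then do come for free from the known properties of Kostka numbers. All of that part of your proposal is sound.

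The problem is that the bridge between the two orbits --- the one statement that cannot be extracted from RSK, since avoidance of $213$ is not a shape condition --- is precisely what you have not proved. You propose a Simion--Schmidt/Myers-style map (freeze the left-to-right minima of the bottom word, greedily refill the rest respecting the column sums $\beta$ and the weakly increasing blocks), and then you write that proving this map is well defined and invertible ``is where the real work lies.'' That concession is the gap: the entire difficulty of the theorem beyond Savage--Wilf is concentrated in this step, and in the two-multiset setting it is not routine --- the greedy refill must simultaneously keep every block weakly increasing, reproduce the multiset $[\beta]$ exactly, and be reversible, and none of these is automatic from Myers's one-multiset argument. The paper avoids the direct bijection altogether: its Lemma 2.5 proves $|S_{\alpha\beta}(213)|=|S_{\alpha\beta}(123)|$ by induction on $N$, showing both quantities satisfy the same recursion. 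The mechanism is to locate the minimal block index $k$ with $\tau_{k\alpha_k}=m$; avoidance of $123$ (resp.\ $213$) forces the first $k-1$ blocks to concatenate, after reversal (resp.\ as they stand), into a single weakly increasing block, so they can be merged into one block of size $\alpha_1+\cdots+\alpha_{k-1}$, and deleting the distinguished occurrence of $m$ (with an inclusion--exclusion correction for the case where the merged block also ends in $m$) lands in a smaller instance $S_{\alpha'(k)\beta'}(\pi)\setminus S_{\alpha''(k)\beta''}(\pi)$ of the same problem. If you want to complete your write-up, either carry out that recursion or actually construct and verify your generalized Simion--Schmidt map; as it stands the proof is incomplete at its central point.
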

It is well known that the $n$-th Catalan number $C_n$ has two classic combinatorial interpretations via SYTs (standard Young tableaux) of shape $(n^2)$ and permutations avoiding $\pi\in S_3$, i.e.,
\[
K_{( n^{2})(1^{2n})}=|S_{( 1^{n})( 1^{n})}(\pi)|=C_{n}.
\]
To the best of our knowledge, no parallel interpretations as above have been found for the Motzkin numbers  $M_n$ and Riordan numbers $R_n$.  In Section 3, we introduce the Catalan-Riordan paths which extend both Dyck and Riordan paths and turn out to be a combinatorial interpretation on the difference array of Catalan numbers.  As applications, we obtain two new combinatorial interpretations for Motzkin and Riordan numbers, for $\pi\in S_3$,
\begin{eqnarray*}
K_{( n^{2})(2^{n})}=R_{n}\quad&\And&\quad |S_{( 2^{n})( 2^{n})}(\pi)|=R_{2n},\\
K_{( (n+1)^{2})(1^2,2^{n})}=M_{n}\quad&\And&\quad |S_{( 1,2^{n})(1,2^{n})}(\pi)|=M_{2n}.
\end{eqnarray*}

In 2011, Lewis \cite{Lewis2011} established a bijection between block-ascending permutations and rectangular SYTs to enumerate two classes of block-ascending permutations via the celebrated hook length formula. In Section 4, Lewis's construction will be extended to any pair of SSYTs.
\begin{theorem}\label{Kostka theorem}
Let $\alpha=(\alpha_{1},\ldots,\alpha_{n})$ and $\beta=(\beta_{1},\ldots,\beta_{m})$ be two compositions of $N$. There is a bijection between the set $S_{\alpha\beta}$ of all generalized permutations of $\alpha\to \beta$ and the set of rectangular SSYTs of shape $(N^m)$ and type $(\alpha,N-\beta)$, i.e.,
\[
\left|S_{\alpha\beta}\right|=\sum_{\lambda\vdash N}K_{\lambda\alpha}K_{\lambda\beta}=K_{( N^{m})(\alpha,N-\beta)},
\]
where $(\alpha,N-\beta)=(\alpha_{1},\ldots,\alpha_{n},N-\beta_{1},
\ldots,N-\beta_{m})$.
\end{theorem}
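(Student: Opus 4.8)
The statement to be proved has two halves: the combinatorial identity $|S_{\alpha\beta}| = \sum_{\lambda\vdash N} K_{\lambda\alpha}K_{\lambda\beta}$ and the closed form $\sum_{\lambda\vdash N} K_{\lambda\alpha}K_{\lambda\beta} = K_{(N^m)(\alpha,N-\beta)}$. The plan is to read a generalized permutation $\tau\in S_{\alpha\beta}$ as the $\mathbb{N}$-matrix whose $(i,j)$ entry records how many times the letter $j$ occurs in the $i$-th block of $\tau$; this matrix has row-sum vector $\alpha$ and column-sum vector $\beta$, and the weakly increasing condition inside each block is precisely the normalization required by RSK. I would get the first half from RSK and the second half from a complementation argument inside the $m\times N$ rectangle, and then splice the two bijections together to realize the rectangular tableau explicitly, in the spirit of Lewis.

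First I would apply the RSK correspondence to the matrix of $\tau$. This yields a pair $(P,Q)$ of semistandard Young tableaux of a common shape $\lambda$, where the insertion tableau $P$ has type $\beta$ (the column sums) and the recording tableau $Q$ has type $\alpha$ (the row sums). Since $P$ and $Q$ have $N$ cells, $\lambda\vdash N$; and since $P$ is filled with the $m$ letters of $[\beta]$, the shape $\lambda$ has at most $m$ rows, i.e.\ $\lambda\subseteq(N^m)$. Grouping the matrices by their common shape gives $|S_{\alpha\beta}| = \sum_{\lambda\vdash N} K_{\lambda\alpha}K_{\lambda\beta}$, which is the first equality.

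Next I would interpret $K_{(N^m)(\alpha,N-\beta)}$ by splitting the alphabet $\{1,\dots,n+m\}$ into the small letters $1,\dots,n$ (total multiplicity $N$) and the large letters $n+1,\dots,n+m$ (total multiplicity $(m-1)N$). In any SSYT $T$ of shape $(N^m)$ the small letters occupy the cells of a genuine partition $\lambda$ in the top-left corner, since they precede the large letters both along rows and down columns; the restriction of $T$ to these cells is an SSYT of shape $\lambda$ and type $\alpha$, contributing the factor $K_{\lambda\alpha}$. The large letters then fill the skew complement $(N^m)/\lambda$ with type $(N-\beta_1,\dots,N-\beta_m)$. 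The heart of the matter is a complementation lemma: the number of such skew fillings equals $K_{\lambda\beta}$. I would prove it by rotating the $m\times N$ rectangle through $180^\circ$ and replacing each large letter $n+k$ by $m+1-k$, which turns a skew filling of $(N^m)/\lambda$ into an ordinary SSYT of the complementary straight shape $\lambda^{\vee}$, and then invoking the rectangular Schur symmetry $K_{\lambda^{\vee}\nu}=K_{\lambda,(N-\nu_1,\dots,N-\nu_m)}$; the two content reversals introduced along the way cancel because Kostka numbers are invariant under permuting the type. This gives $K_{(N^m)(\alpha,N-\beta)} = \sum_{\lambda\vdash N} K_{\lambda\alpha}K_{\lambda\beta}$, the second equality.

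Finally I would assemble the explicit bijection by sending $\tau\mapsto(P,Q)$ via RSK and then placing $Q$ (on the small letters $1,\dots,n$) in the corner shape $\lambda$ and the complement of $P$ (on the large letters $n+1,\dots,n+m$) in the skew region $(N^m)/\lambda$. Across the boundary the SSYT conditions hold automatically, since every small letter is strictly smaller than every large letter; inverting simply reads off $\lambda$, recovers $Q$ and the skew filling, un-complements to recover $P$, and applies inverse RSK. I expect the main obstacle to be the complementation lemma: one must check that the rotate-complement-relabel operation is a well-defined bijection carrying the skew shape onto the correct straight shape with the correct type, and keep the two reversals straight so that the final type is exactly $(\alpha,N-\beta)$ rather than a reindexed version of it. Verifying that the small letters really form a partition and that gluing produces a bona fide SSYT of shape $(N^m)$ is then routine.
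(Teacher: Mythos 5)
Your proposal is correct, and its top-level architecture matches the paper's: apply RSK to get a pair $(P,Q)$ of common shape $\lambda$ with types $\alpha$ and $\beta$, which gives $|S_{\alpha\beta}|=\sum_{\lambda\vdash N}K_{\lambda\alpha}K_{\lambda\beta}$, and then merge the pair into a single SSYT of shape $(N^m)$ by keeping one tableau in the upper-left corner and encoding the other in the skew complement $(N^m)/\lambda$. Where you genuinely diverge is in how the merging step is established. The paper defines the merge as an explicit map $\theta$ (Algorithm \ref{algorithm}: at step $i$ the entry $m+n+1-i$ is written into the bottom empty cell of each of the $d-\beta_i$ columns in which $Q$ contains no $i$) and proves Theorem \ref{bijection on SSYT} by a direct, fairly lengthy verification with the auxiliary statistics $\lambda_i(k)=\#\{j\mid Q_{ij}\le k\}$ and $\mu_i(s)=\#\{j\mid T_{ij}\le s\}$. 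You instead count the skew fillings of $(N^m)/\lambda$ of type $N-\beta$ via the $180^\circ$ rotate-relabel bijection onto straight-shape SSYTs of shape $\lambda^\vee$, and then invoke the rectangular duality $K_{\lambda^\vee\nu}=K_{\lambda,(N-\nu_1,\ldots,N-\nu_m)}$ (equivalently $s_{\lambda^\vee}(x)=(x_1\cdots x_m)^N s_\lambda(x_1^{-1},\ldots,x_m^{-1})$) together with the symmetry of Kostka numbers in the type; your bookkeeping of the reversals is right, and the resulting bijection is, up to the harmless swap of which tableau sits in the corner, the same map as $\theta$. Your route buys a much shorter verification at the cost of importing two standard facts, and it makes the identity $K_{(N^m)(\alpha,N-\beta)}=\sum_\lambda K_{\lambda\alpha}K_{\lambda\beta}$ transparent without any explicit algorithm. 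One thing the paper's version gives that you do not state: Theorem \ref{bijection on SSYT} is proved for an arbitrary bounding width $d$, sending pairs of common shape contained in $(d^m)$ to SSYTs of shape $(d^m)$ and type $(\alpha,d-\bar{\beta})$, which is what later yields Corollary \ref{cor-6} on $S^{k+1}_{\alpha\beta}$; your rotation argument generalizes verbatim to that setting, but as written you only treat $d=N$, which is all that Theorem \ref{Kostka theorem} itself requires.
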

In Section 5, we present some applications of above results which contain the main results of Lewis \cite{Lewis2011}, Mei and Wang \cite{Mei}, and Chen \cite{Chen}.

\section{Proof of Theorem \ref{main-1}}

\begin{lemma}\label{monotonicity-lemma}
{\rm\cite{White}} Kostka numbers $K_{\lambda\mu}$ are Schur-concave on $\mu$. Namely, if $\lambda$, $\mu$, and $\nu$ are partitions of $N$,  $\mu=(\mu_1,\mu_2,\ldots)$ and $\nu=(\nu_1,\nu_2,\ldots)$ satisfy that $\mu_{1}+\cdots+\mu_{i}\geq \nu_{1}+\cdots+\nu_{i}$ for all $i\geq 1$,  then $K_{\lambda\mu}\leq K_{\lambda\nu}$.
\end{lemma}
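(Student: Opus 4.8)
The plan is to interpret the Kostka number through symmetric functions and reduce the dominance inequality to a single elementary transfer, after which a two-row Jacobi–Trudi identity together with Pieri positivity finishes the argument. Recall that $K_{\lambda\mu}$ is the coefficient of the Schur function $s_\lambda$ in the complete homogeneous symmetric function $h_\mu=\prod_i h_{\mu_i}$, i.e. $h_\mu=\sum_{\lambda}K_{\lambda\mu}\,s_\lambda$. Since $h_\mu$ depends only on the multiset $\{\mu_1,\mu_2,\ldots\}$, the Kostka number is automatically symmetric in the parts of $\mu$; this is the symmetric-function shadow of the Bender–Knuth involutions, and it lets me treat the parts as an unordered multiset throughout.

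First I would reduce to a single step of the dominance order. It is classical (Muirhead/Brylawski) that whenever $\mu$ dominates $\nu$, one can pass from $\mu$ to $\nu$ by a finite sequence of transfers, each replacing two parts $a\ge b$ by $a-1$ and $b+1$. By transitivity it then suffices to treat one such transfer, and the step is vacuous unless $a\ge b+2$, since for $a=b+1$ the two multisets of parts coincide. Write $\mu=(a,b)\cup\rho$ and $\nu=(a-1,b+1)\cup\rho$ as multisets, and set $h_\rho=\prod_{k}h_{\rho_k}$ for the unchanged parts.

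The heart of the matter is the two-row Jacobi–Trudi identity $s_{(a-1,b+1)}=h_{a-1}h_{b+1}-h_a h_b$, valid exactly when $a-1\ge b+1$. Rewriting it as $h_{a-1}h_{b+1}=h_a h_b+s_{(a-1,b+1)}$ and multiplying by $h_\rho$ gives the key identity
\[
h_\nu=h_\mu+h_\rho\, s_{(a-1,b+1)}.
\]
Extracting the coefficient of $s_\lambda$ yields $K_{\lambda\nu}=K_{\lambda\mu}+[s_\lambda]\big(h_\rho\, s_{(a-1,b+1)}\big)$. The correction term is a nonnegative integer because the product of a Schur function with complete homogeneous symmetric functions is Schur-positive (iterated Pieri rule, or Littlewood–Richardson). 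Hence $K_{\lambda\mu}\le K_{\lambda\nu}$, as claimed.

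I expect the only genuine subtlety to be the reduction in the second paragraph: one must confirm that the dominance order on partitions is generated by these single-box transfers and keep careful track of the direction of the inequality, namely that the \emph{dominating} partition produces the \emph{smaller} Kostka number. The remaining steps are purely formal once the right identity is in hand. As a fallback I would keep in mind a bijective route: fibering $\mathrm{SSYT}(\lambda,\mu)$ over the placement of the entries different from the two affected values reduces the claim to the symmetric unimodality, in the number of $1$'s, of two-rowed skew SSYT counts, a statement provable by an explicit injection or by $\mathfrak{sl}_2$ weight-space considerations. The symmetric-function argument above is, however, shorter and cleaner, so I would present that as the main proof.
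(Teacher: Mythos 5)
The paper offers no proof of this lemma at all: it is quoted verbatim from White's 1980 paper \cite{White} and used as a black box, so there is nothing in the text to compare your argument against line by line. That said, your proof is correct and self-contained. The three ingredients all check out: (i) $h_\mu=\sum_\lambda K_{\lambda\mu}s_\lambda$ depends only on the multiset of parts, which disposes of the ordering issue; (ii) the dominance order on partitions of $N$ is indeed generated by single-unit transfers $(a,b)\mapsto(a-1,b+1)$ with $a\ge b+2$ (the covering relations of dominance are exactly such moves, cf.\ Macdonald I.(1.16) or Brylawski), so transitivity reduces the claim to one transfer; and (iii) the two-row Jacobi--Trudi identity $s_{(a-1,b+1)}=h_{a-1}h_{b+1}-h_ah_b$ gives $h_\nu=h_\mu+h_\rho\,s_{(a-1,b+1)}$, whose correction term is Schur-positive by iterated Pieri, yielding $K_{\lambda\nu}=K_{\lambda\mu}+[s_\lambda]\bigl(h_\rho\,s_{(a-1,b+1)}\bigr)\ge K_{\lambda\mu}$. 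The direction of the inequality is handled correctly: the dominating partition $\mu$ is the one you transfer \emph{from}, and it receives the smaller Kostka number. If one wanted a purely combinatorial proof in the spirit of the rest of this paper, an explicit injection $\mathrm{SSYT}(\lambda,\mu)\hookrightarrow\mathrm{SSYT}(\lambda,\nu)$ for a single transfer (or your $\mathfrak{sl}_2$ fallback) would do, but the symmetric-function argument you give is shorter and entirely adequate as a substitute for the citation.
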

With the above result, Theorem \ref{main-1} is an easy consequence of three lemmas below. If $\alpha=(\alpha_{1},\alpha_{2},\ldots,\alpha_{n})$ is a composition, let $\bar{\alpha}=(\alpha_{n},\alpha_{n-1},\ldots,\alpha_{1})$ be the reverse of $\alpha$. For $\beta=(\beta_1,\beta_2,\ldots,\beta_m)$ and $\tau\in S_{\alpha\beta}$, define $\tau'\in S_{\bar{\alpha}\beta}$ and $\tau''\in S_{\alpha\bar{\beta}}$ as follows. The $i$-th block of $\tau'$ is the $(n+1-i)$-th block of $\tau$, i.e., $\tau'_{ij}=\tau_{(n+1-i)j}$ for all $1\le j\le \alpha_{n+1-i}$. The $i$-th block of $\tau''$ is defined by $\tau''_{ij}+\tau_{i(\alpha_i+1-j)}=m+1$ for all $1\le j\le \alpha_i$. \\
E.g., $\alpha=(1,1,2,1), \beta=(2,2,1)$, and $\tau=\left(\begin{array}{cccccc}1&2&3&3&4\\ 2&3&1&2&1\\ \end{array}\right)\in S_{\alpha\beta}$, then $\bar{\alpha}=(1,2,1,1), \bar{\beta}=(1,2,2)$,
\[
\tau'=\left(\begin{array}{cccccc}1&2&2&3&4\\ 1&1&2&3&2\\ \end{array}\right)\in S_{\bar{\alpha}\beta} \quad\And\quad\tau''=\left(\begin{array}{cccccc}1&2&3&3&4\\ 2&1&2&3&3\\ \end{array}\right)\in S_{\alpha\bar{\beta}}.
\]
Below is a quick fact.
\begin{lemma}
\begin{eqnarray*}
|S_{\alpha\beta}(123)|=|S_{\bar{\alpha}\beta}(321)|,  \quad\quad\quad\quad\quad~~\\
|S_{\alpha\beta}(213)|=|S_{\bar{\alpha}\beta}(312)|=
|S_{\bar{\alpha}\bar{\beta}}(132)|=|S_{\alpha\bar{\beta}}(231)|.
\end{eqnarray*}
\end{lemma}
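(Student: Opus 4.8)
The plan is to derive both identities from two elementary symmetry operations on generalized permutations, namely the maps $\tau\mapsto\tau'$ and $\tau\mapsto\tau''$ already introduced in the excerpt. The first reverses the order of the blocks (a left-to-right reflection of the array), while the second complements the entries and reverses them within each block (an up-down reflection of the bottom row). I would first check that these are genuine bijections $S_{\alpha\beta}\to S_{\bar\alpha\beta}$ and $S_{\alpha\beta}\to S_{\alpha\bar\beta}$: each is manifestly involutive up to the reversal of $\alpha$ or $\beta$, so the only thing to verify is that the output is again a valid two-rowed array. For $\tau'$ this is immediate, since the blocks are merely permuted and each block stays weakly increasing. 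For $\tau''$ the entrywise complement $x\mapsto m+1-x$ reverses the order within a block, and the extra position-reversal $j\mapsto\alpha_i+1-j$ restores the weak increase, while the multiset of values becomes $[\bar\beta]$.

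The crux is to track how pattern containment transforms under each map. Because a pattern from $S_m$ selects one entry from each of $m$ strictly increasing blocks $i_1<\cdots<i_m$, and forces the selected values to be pairwise distinct, the within-block position data is irrelevant to the induced pattern. For $\tau'$ the selected blocks are read in the opposite order, so a subsequence of type $\pi$ in $\tau$ corresponds to a subsequence of type $\pi^r$ (the reverse of $\pi$) in $\tau'$; hence $\tau$ avoids $\pi$ iff $\tau'$ avoids $\pi^r$, giving $|S_{\alpha\beta}(\pi)|=|S_{\bar\alpha\beta}(\pi^r)|$. For $\tau''$ the block order is preserved but every value is sent to $m+1-x$, an order-reversing map; this reverses the relative order of any chosen distinct values, so a subsequence of type $\pi$ becomes one of type $\pi^c$ (the complement of $\pi$), independently of the constant $m+1$. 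Hence $|S_{\alpha\beta}(\pi)|=|S_{\alpha\bar\beta}(\pi^c)|$.

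With these two equalities in hand, both lines of the lemma reduce to bookkeeping within $S_3$. For the first line, $\pi=123$ has reverse $\pi^r=321$, yielding $|S_{\alpha\beta}(123)|=|S_{\bar\alpha\beta}(321)|$. For the second line I would chain the maps: applying $\tau\mapsto\tau'$ to $\pi=213$ gives $\pi^r=312$, so $|S_{\alpha\beta}(213)|=|S_{\bar\alpha\beta}(312)|$; then $\tau\mapsto\tau''$ sends $312$ to its complement $132$, so $|S_{\bar\alpha\beta}(312)|=|S_{\bar\alpha\bar\beta}(132)|$; finally $\tau\mapsto\tau'$ sends $132$ to its reverse $231$ and, using $\bar{\bar\alpha}=\alpha$, gives $|S_{\bar\alpha\bar\beta}(132)|=|S_{\alpha\bar\beta}(231)|$. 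As a consistency check, applying $\tau\mapsto\tau''$ directly to $\pi=213$ sends it to the complement $231$ and reproduces the outer equality $|S_{\alpha\beta}(213)|=|S_{\alpha\bar\beta}(231)|$.

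I expect the only real subtlety to be the assertion that within-block positions may be ignored when tracing a pattern through $\tau''$. This is exactly where one must invoke that the indices $i_1<\cdots<i_m$ are strictly increasing, so at most one entry is drawn from each block and the reversal $j\mapsto\alpha_i+1-j$ cannot affect the relative order of the chosen entries; one also needs that complementation preserves distinctness of values, which holds because a pattern in $S_m$ forces the $m$ selected entries to be pairwise distinct. Everything else is routine verification that the two maps are well defined and mutually inverse.
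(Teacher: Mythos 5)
Your proof is correct and is exactly the argument the paper intends: the lemma is stated as a ``quick fact'' immediately after the maps $\tau\mapsto\tau'$ and $\tau\mapsto\tau''$ are defined, precisely because these are the block-reversal and value-complement bijections realizing $|S_{\alpha\beta}(\pi)|=|S_{\bar\alpha\beta}(\pi^r)|$ and $|S_{\alpha\beta}(\pi)|=|S_{\alpha\bar\beta}(\pi^c)|$. Your attention to the two genuine subtleties (distinctness of the selected values, and the irrelevance of within-block positions since the block indices are strictly increasing) supplies the details the paper omits.
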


Given two compositions $\alpha=(\alpha_1,\ldots,\alpha_n)$ and $\beta=(\beta_1,\ldots,\beta_m)$ of $N$. Let $P$ and $Q$ be SSYTs of the same shape $\lambda=(\lambda_1,\lambda_2)$ (at most two rows) and of types $\alpha$ and $\beta$ respectively. Denote by $P\boxplus Q$ a rectangular SSYT of shape $(N,N)$ and type $(\alpha,\bar{\beta})$ whose $(i,j)$-entry is
\[
P\boxplus Q(i,j)=\left\{
  \begin{array}{ll}
   P(i,j) & \If j\le \lambda_i;\\
    n+m+1-Q(3-i,N+1-j) &\;{\rm else.}
  \end{array}
\right.
\]
Namely, $P\boxplus Q$ is obtained by rotating $Q$ by $180^{\circ}$, replacing each entry $i$ of $Q$ with $n+m+1-i$, and jointing the resulting diagram with $P$. E.g., If $P={\tiny\young(113,24)}$ and $Q={\tiny\young(134,25)}$, then $P\boxplus Q={\tiny \young(11358,24679)}$. \\

For any $\tau\in S_{\alpha\beta}(321)$, if $\tau\xrightarrow{RSK}(P,Q)$, then $P$ and $Q$ have at most two rows. It is clear that the composition of $\boxplus$ and RSK correspondence
\[
\tau\,\xrightarrow{RSK}\,(P,Q)\,\longrightarrow\, P\boxplus Q
\]
gives a bijection from $S_{\alpha\beta}(321)$ to the set of all rectangular SSYTs of shape $(N, N)$ and type $(\beta,\bar{\alpha})$. Thus, $|S_{\alpha\beta}(321)|=K_{(N,N)(\beta,\bar{\alpha})}=K_{(N,N)(\alpha,\beta)}$.

\begin{lemma}\label{321-lemma}
For two compositions $\alpha$ and $\beta$ of $N$, the cardinality
\[
|S_{\alpha\beta}(321)|=|S_{\beta\alpha}(321)|=K_{(N,N)(\alpha,\beta)}
\]
is independent of the order of the entries of $\alpha$ and $\beta$, and Schur-concave on $\alpha$ and $\beta$ respectively if they are partitions.
\end{lemma}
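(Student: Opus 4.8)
The equality $|S_{\alpha\beta}(321)| = K_{(N,N)(\alpha,\beta)}$ is already delivered by the map built just above the statement, and I would simply record why each arrow is a bijection. Under RSK a two-line array in $S_{\alpha\beta}$ corresponds bijectively to a pair $(P,Q)$ of semistandard tableaux of a common shape $\lambda$ with $P$ of type $\beta$ and $Q$ of type $\alpha$; by the standard RSK property relating the number of rows of the insertion tableau to the longest strictly decreasing subsequence (Greene's theorem), the number of rows of $\lambda$ is the maximal length of a subsequence $\tau_{i_1j_1}>\tau_{i_2j_2}>\cdots$ with $i_1<i_2<\cdots$, i.e.\ the longest occurrence of a decreasing pattern. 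Hence $\tau$ avoids $321$ exactly when $\lambda$ has at most two rows, so RSK restricts to a bijection from $S_{\alpha\beta}(321)$ onto two-row pairs $(P,Q)$, and the operation $\boxplus$ then identifies these with the rectangular tableaux of shape $(N,N)$ and type $(\beta,\bar\alpha)$. Since a Kostka number is unchanged by reordering the parts of its type, $K_{(N,N)(\beta,\bar\alpha)} = K_{(N,N)(\alpha,\beta)}$, giving the asserted value.

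For the first displayed equality I would run the identical construction with the roles of $\alpha$ and $\beta$ exchanged, obtaining $|S_{\beta\alpha}(321)| = K_{(N,N)(\beta,\alpha)}$, and then invoke the symmetry of Kostka numbers in the parts of the type to get $K_{(N,N)(\beta,\alpha)} = K_{(N,N)(\alpha,\beta)}$; the same invariance yields independence of the order of the entries of $\alpha$ and of $\beta$. (Alternatively, the involution on two-line arrays that swaps the two rows and re-sorts the columns carries $S_{\alpha\beta}$ onto $S_{\beta\alpha}$ and, via the RSK symmetry $(P,Q)\mapsto(Q,P)$, preserves the two-row condition, hence restricts to a bijection $S_{\alpha\beta}(321)\to S_{\beta\alpha}(321)$.)

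It remains to prove Schur-concavity, and this is where the real work lies. Assume $\alpha$ and $\beta$ are partitions and that $\alpha$ dominates $\alpha'$ (partitions of $N$); I must show $K_{(N,N)(\alpha,\beta)}\le K_{(N,N)(\alpha',\beta)}$. Because reordering the type leaves the Kostka number fixed, I may replace $(\alpha,\beta)$ and $(\alpha',\beta)$ by the genuine partitions $\mu=\mathrm{sort}(\alpha\cup\beta)$ and $\nu=\mathrm{sort}(\alpha'\cup\beta)$ of $2N$, after which Lemma~\ref{monotonicity-lemma} with $\lambda=(N,N)$ finishes the argument provided $\mu$ dominates $\nu$. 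This lifting of dominance from $\alpha$ to the merged partition is the crux and the one genuinely nonroutine step. The clean route is the convex-function criterion for majorization: $\alpha$ dominates $\alpha'$ iff $\sum_i(\alpha_i-t)_+ \ge \sum_i(\alpha'_i-t)_+$ for every real $t$ (the total sizes being equal), and since $\sum_i(\mu_i-t)_+ = \sum_i(\alpha_i-t)_+ + \sum_i(\beta_i-t)_+$ and likewise for $\nu$ with $\alpha'$, adding the common term $\sum_i(\beta_i-t)_+$ to both sides preserves the inequality while keeping the sizes equal, so $\mu$ dominates $\nu$. Applying Lemma~\ref{monotonicity-lemma} then gives $K_{(N,N)\mu}\le K_{(N,N)\nu}$, i.e.\ $|S_{\alpha\beta}(321)|\le|S_{\alpha'\beta}(321)|$, and the symmetric argument in the second coordinate yields concavity in $\beta$. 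Thus the entire difficulty is isolated in the majorization step, everything else being either the bijection already in hand or the stated invariance and monotonicity of Kostka numbers.
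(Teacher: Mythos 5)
Your proof is correct and follows the same route as the paper: the RSK-plus-$\boxplus$ bijection gives $|S_{\alpha\beta}(321)|=K_{(N,N)(\beta,\bar\alpha)}=K_{(N,N)(\alpha,\beta)}$, and the remaining claims come from the reordering-invariance and the Schur-concavity (Lemma \ref{monotonicity-lemma}) of Kostka numbers. The one place you go beyond the paper is the explicit verification that dominance of $\alpha$ over $\alpha'$ lifts to dominance of the sorted concatenation $(\alpha,\beta)$ over $(\alpha',\beta)$ via the convex-function criterion $t\mapsto\sum_i(\,\cdot\,-t)_+$; the paper leaves this step implicit, and your argument for it is sound.
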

%From the above results, the Theorem \ref{equal-distribution} holds by the following key lemma.
\begin{lemma}\label{key-lemma}
For any two compositions $\alpha$ and $\beta$ of $N$, we have
\[
|S_{\alpha\beta}(213)|=|S_{\alpha\beta}(123)|.
\]
\end{lemma}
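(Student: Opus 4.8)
The plan is to separate the one genuinely new bijection from the bookkeeping that the earlier results already supply. I claim everything reduces to the single \emph{generalized Simion--Schmidt} identity
\[
|S_{\gamma\delta}(123)|=|S_{\gamma\delta}(132)|\qquad\text{for all compositions }\gamma,\delta\text{ of }N.
\]
Granting this, the lemma follows by combining the reflection identities of the second Lemma of this section with Lemma~\ref{321-lemma}. Those reflection identities give $|S_{\alpha\beta}(213)|=|S_{\bar\alpha\bar\beta}(132)|$ and, as the instance $\gamma=\bar\alpha,\delta=\bar\beta$ of $|S_{\gamma\delta}(123)|=|S_{\bar\gamma\delta}(321)|$, also $|S_{\bar\alpha\bar\beta}(123)|=|S_{\alpha\bar\beta}(321)|$. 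Hence $|S_{\alpha\beta}(213)|=|S_{\bar\alpha\bar\beta}(132)|=|S_{\bar\alpha\bar\beta}(123)|=|S_{\alpha\bar\beta}(321)|=K_{(N,N)(\alpha,\bar\beta)}=K_{(N,N)(\alpha,\beta)}$, where the middle equality is the displayed identity and the last step uses that Kostka numbers are invariant under reordering the parts. The same chain gives $|S_{\alpha\beta}(123)|=|S_{\bar\alpha\beta}(321)|=K_{(N,N)(\bar\alpha,\beta)}=K_{(N,N)(\alpha,\beta)}$, so the two sides coincide.

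To prove the displayed identity I would adapt Simion and Schmidt's construction to the two-line arrays. For $\tau\in S_{\gamma\delta}$ with bottom word $\tau_{11}\cdots\tau_{n\gamma_n}$, call an entry \emph{distinguished} if no strictly smaller entry precedes it (a weak left-to-right minimum), and record the positions and values of the distinguished entries as the \emph{skeleton} of $\tau$. Since the entries inside each block weakly increase, only the first entry of a block can be distinguished, so a skeleton always respects the block partition. I would then establish three facts: (i) in a $123$-avoider the non-distinguished entries are weakly decreasing, so $\tau$ is recovered from its skeleton by writing the remaining values in weakly decreasing order in the free slots; (ii) for each admissible skeleton there is exactly one $132$-avoider with that skeleton, obtained by a greedy left-to-right completion of the free slots; and (iii) the skeletons occurring in $S_{\gamma\delta}(123)$ are exactly those occurring in $S_{\gamma\delta}(132)$. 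Sending each $123$-avoider to the unique $132$-avoider with the same skeleton is then the desired bijection; because the skeleton together with the free slots uses one fixed multiset of bottom values, the map fixes the top row and preserves the type $\delta$, so it stays inside $S_{\gamma\delta}$.

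The main obstacle is carrying out (ii) and (iii) in the presence of repeated bottom entries and the weakly-increasing-within-a-block constraint, which is precisely where the generalized problem departs from the classical one. I must fix the tie-breaking in the definition of a distinguished entry so that the distinguished set really is common to the two avoidance classes, and so that the greedy $132$-completion never breaks monotonicity inside a block and never manufactures a left-to-right minimum outside the prescribed skeleton. I expect to verify both points by induction on the number of blocks, splitting at the position of the smallest bottom value and exploiting the recursive min-splitting structure of $123$- and $132$-avoiders. Once monotonicity-preservation and the coincidence of skeleton sets are checked, bijectivity is formal and the reduction above completes the proof.
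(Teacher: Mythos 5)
Your reduction step is sound as far as it goes: granting the identity $|S_{\gamma\delta}(123)|=|S_{\gamma\delta}(132)|$ for all compositions, the chain through the reflection identities and Lemma~\ref{321-lemma} does give $|S_{\alpha\beta}(213)|=K_{(N,N)(\alpha,\beta)}=|S_{\alpha\beta}(123)|$. But that displayed identity is precisely a bridge between the two reversal/complement classes $\{123,321\}$ and $\{213,312,132,231\}$, so it carries the entire content of the lemma, and you have not proved it: you have only sketched a Simion--Schmidt-style plan and explicitly deferred the two steps ((ii) and (iii)) where all the difficulty lives. Worse, two of the assertions you do make are false. First, ``only the first entry of a block can be distinguished'' fails whenever a block begins with a repeated value: in a block $2\,2\,3$ preceded by nothing smaller, both $2$'s are weak left-to-right minima. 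Second, and more seriously, claim (i) is false, because an occurrence of a pattern must use entries from \emph{distinct} blocks (the definition requires $i_1<i_2<i_3$), while entries within a block weakly increase. For example, with $\gamma=(2,1)$ and bottom word $1\,2\,3$ (blocks $12$ and $3$), there is no occurrence of $123$ since only two blocks are available, yet the non-distinguished entries $2,3$ increase. So the classical ``non-minima must weakly decrease'' structure of $123$-avoiders does not transfer, and any skeleton-based bijection would have to be redesigned around the block-disjointness of pattern occurrences, not merely around tie-breaking.

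The paper takes an entirely different route that sidesteps these issues: it shows that $|S_{\alpha\beta}(123)|$ and $|S_{\alpha\beta}(213)|$ satisfy one and the same recursion, obtained by conditioning on the least block index $k$ whose last entry equals the maximal value $m$. For $123$-avoiders the first $k-1$ blocks must then be weakly decreasing block by block, for $213$-avoiders weakly increasing, and in either case they can be merged into a single block, reducing to smaller compositions; induction on $N$ finishes the proof. If you want to salvage a bijective argument, you would need a genuine extension of Simion--Schmidt (in the spirit of Myers's bijection for $S_{(1^N)\beta}$) that accounts simultaneously for multiplicities in the bottom row and for the one-entry-per-block restriction; as written, your proof has a genuine gap.
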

\begin{proof}Suppose $\alpha=(\alpha_1,\alpha_2,\ldots,\alpha_n)$ and $\beta=(\beta_1,\beta_2,\ldots,\beta_m)$ are compositions of $N$. The result is trivial for $n=1$. For $n\ge 2$,  we will show that $|S_{\alpha\beta}(213)|$ and $|S_{\alpha\beta}(123)|$ have the same recursive formula
\begin{eqnarray}\label{213-equation}
|S_{\alpha\beta}(\pi)|=\sum_{k=1}^{n}
\left(|S_{\alpha'(k)\beta'}(\pi)|-|S_{\alpha''(k)\beta''}(\pi)|\right) \quad\quad\text{for}~\pi=123~\text{or}~213,
\end{eqnarray}
where $|S_{\alpha\beta}(\pi)|=0$ if some entry of $(\alpha,\beta)$ is negative, $\alpha_{0}=0$, and for $1\le k\le n$,
\begin{eqnarray*}
\alpha'(k)&=&(\alpha_0+\alpha_{1}+\cdots+\alpha_{k-1},\alpha_{k}-1,\alpha_{k+1},\ldots,\alpha_{n}),\\
\alpha''(k)&=&(\alpha_0+\alpha_{1}+\cdots+\alpha_{k-1}-1,\alpha_{k}-1,\alpha_{k+1},\ldots,\alpha_{n}),\\
\beta'&=&(\beta_{1},\ldots,\beta_{m-1},\beta_{m}-1),\\ \beta''&=&(\beta_{1},\ldots,\beta_{m-1},\beta_{m}-2).
\end{eqnarray*}
Given $\tau=\tau_{11}\tau_{12}\ldots\tau_{1\alpha_{1}} \ldots \tau_{n1}\ldots\tau_{n(\alpha_{n}-1)}\tau_{n\alpha_{n}}\in S_{\alpha\beta}(123)$, let $k\in[n]$ be the minimum index such that $\tau_{k\alpha_{k}}=m$. If $k=1$, let $\tau'$ be the generalized permutation obtained from $\tau$ by removing $\tau_{1\alpha_{1}}$. Then $\tau'\in S_{\alpha'(1)\beta'}$ and avoids the pattern $123$. It gives a bijection between $\{\tau\in S_{\alpha\beta}(123)\mid \tau_{1\alpha_1}=m\}$ and $S_{\alpha'(1)\beta'}(123)$. Note $|S_{\alpha''(1)\beta''}(123)|=0$ by assumptions. Thus
\[
\#\{\tau\in S_{\alpha\beta}(123)\mid \tau_{1\alpha_1}=m\}=|S_{\alpha'(1)\beta'}(123)|-|S_{\alpha''(1)\beta''}(123)|.
\]
If $k\geq 2$, then $\tau_{i\alpha_{i}}<\tau_{k\alpha_{k}}=m$ for all $1\leq i<k$.  Since $\tau$ avoids the pattern $123$, the subsequence $\tau_{11}\tau_{12}\ldots\tau_{1\alpha_{1}}\ldots\tau_{(k-1)1}\ldots\tau_{(k-1)\alpha_{k-1}}$ of the first $k-1$ blocks avoids the pattern $12$, which implies
 \[
\tau_{(k-1)1}\leq\cdots\leq\tau_{(k-1)\alpha_{k-1}}\leq\cdots\leq \tau_{21}\leq\cdots\leq \tau_{2\alpha_{2}}\leq \tau_{11}\leq \cdots\leq \tau_{1\alpha_{1}}<m=\tau_{k\alpha_k}.
\]
Let $\gamma=(\gamma_1,\ldots,\gamma_{n-k+2})=(\alpha_1+\cdots+\alpha_{k-1},\alpha_k,\ldots,\alpha_n)$. The sequence
\[
\tilde{\tau}=\tau_{(k-1)1}\ldots\tau_{(k-1)\alpha_{k-1}}\ldots \tau_{21}\ldots \tau_{2\alpha_{2}} \tau_{11} \ldots \tau_{1\alpha_{1}}\tau_{k1}\ldots\tau_{k\alpha_k}\ldots\tau_{n1}\ldots\tau_{n\alpha_n}
\]
becomes a generalized permutation in
\[
X=\{\sigma\in S_{\gamma\beta}(123)\mid \sigma_{1\gamma_1}<\sigma_{2\gamma_2}=m\}.
\]
Indeed, the map $\tau\mapsto \tilde{\tau}$ defines a bijection in this situation, i.e., for $k\ge 2$
\[
|X|=\#\{\tau\in S_{\alpha\beta}(123)\mid \tau_{i\alpha_{i}}<\tau_{k\alpha_{k}}=m {\rm ~for~ } 1\leq i<k\}.
\]
Let
\begin{eqnarray*}
X'=\{\sigma\in S_{\gamma\beta}(123)\mid \sigma_{2\gamma_2}=m\}\quad \And \quad
X''=\{\sigma\in S_{\gamma\beta}(123)\mid \sigma_{1\gamma_1}=\sigma_{2\gamma_2}=m\}.
\end{eqnarray*}
Obviously, $|X|=|X'|-|X''|$. Similar as before, we can construct two bijections, between $X'$ and $S_{\gamma'(2)\beta'}(123)$ by removing $\sigma_{2\gamma_2}$ from $\sigma\in X'$, and between $X''$ and $S_{\gamma''(2)\beta''}(123)$ by removing $\sigma_{1\gamma_1},\sigma_{2\gamma_2}$ from $\sigma\in X''$. It follows that in this case,
\[
|X|=|S_{\gamma'(2)\beta'}(123)|-|S_{\gamma''(2)\beta''}(123)|.
\]
Notice
\[
S_{\gamma'(2)\beta'}(123)=S_{\alpha'(k)\beta'}(123)\quad \And \quad S_{\gamma''(2)\beta''}(123)=S_{\alpha''(k)\beta''}(123).
\]
Hence
\[
\#\{\tau\in S_{\alpha\beta}(123)\mid \tau_{i\alpha_{i}}<\tau_{k\alpha_{k}}=m {\rm~for~ } 1\leq i<k\}=|S_{\alpha'(k)\beta'}(123)|-|S_{\alpha''(k)\beta''}(123)|.
\]
So far, we have proved that $S_{\alpha\beta}(123)$ satisfies the recursive formula (\ref{213-equation}). For $S_{\alpha\beta}(213)$, the proof is similar. For $\tau\in S_{\alpha\beta}(213)$, let $k\in [n]$ be the minimum index such that $\tau_{k\alpha_{k}}=m$. If $k=1$, the proof is the same as before. If $k\geq 2$, then $\tau_{i\alpha_{i}}<\tau_{k\alpha_{k}}=m$ for all $1\leq i<k$.  Since $\tau$ avoids the pattern $213$, the subsequence $\tau_{11}\tau_{12}\ldots\tau_{(k-1)1}\ldots\tau_{(k-1)\alpha_{k-1}}$ avoids the pattern $21$, which implies
\[
\tau_{11}\le\cdots\le \tau_{1\alpha_1}\le\tau_{21}\le \cdots\le \tau_{(k-1)1}\le \cdots\le \tau_{(k-1)\alpha_{k-1}}<m=\tau_{k\alpha_k}.
\]
Then $\tau$ can be regarded as a generalized permutation in $X$. Applying the same arguments as before, we can obtain that $S_{\alpha\beta}(213)$ also has the recursive formula (\ref{213-equation}). The proof completes by induction on $N$.
\end{proof}

\section{Catalan-Riordan Paths}
A \emph{Dyck path} of length $2n$ is a lattice path in the $xy$-plane from $(0,0)$ to $(2n,0)$ with the step set $\{(1,1),(1,-1)\}$ and never going below the $x$-axis. The number of Dyck paths of length $2n$ is the $n$-th Catalan number $C_n$. Among those combinatorial interpretations of Catalan numbers, two  classic interpretations are via SYTs and pattern avoidance of permutations, i.e.,
\begin{equation}\label{Catalanformula}
K_{(n^2)(1^n)}=S_{(1^n)(1^n)}(\pi)=C_n=\frac{1}{n+1}{2n\choose n}.
\end{equation}
A \emph{Motzkin path} of length $n$ is a lattice path in the $xy$-plane from $(0,0)$ to $(n,0)$ with the step set $\{(1,1),(1,0),(1,-1)\}$ that never goes below the $x$-axis. A \emph{Riordan path} of length $n$ is a Motzkin path from $(0,0)$ to $(n,0)$ that has no step $(1,0)$ on the $x$-axis. The number of Motzkin and Riordan paths of length $n$ is the $n$-th Motzkin number $M_n$ and Riordan number $R_n$ respectively. There are some interpretations on Motzkin and Riordan numbers as SYTs and pattern avoidances of permutations. For Motzkin numbers, Zabrocki interpreted $M_n$ as the number of SYTs of height $\le 3$, Baril \cite{Baril2011} obtained that $M_n$ is the number of $(n+1)$- length permutations avoiding the pattern $132$ and the dotted pattern $23\dot{1}$, Burstein and Pantone \cite{Burstein2014} showed that $M_n$ is the number of $n$-length involutions avoiding patterns $4231$ and $5276143$. For Riordan numbers, Callan obtained that $R_n$ is the number of $321$-avoiding permutations on $[n]$ in which each left-to-right maximum is a descent,
Chen-Deng-Yang \cite{Chen YC} proved that $R_n$ is  the number of derangements on $[n]$ that avoid both $321$ and $3\bar{1}42$, Regev \cite{Regev2010} found that $R_{n}$ is the number of SYTs of shape $( k,k,1^{n-2k})$ for all $k\geq 0$ which gave an expression of $R_n$ through the hook length formula. See OEIS \cite[A001006, A005043]{Sloane} for more investigations and combinatorial interpretations of Motzkin and Riordan numbers.

In this section, we will introduce the Catalan-Riordan path, which extends the concepts of Dyck and Riordan paths. Parallel to the interpretations (\ref{Catalanformula}) on Catalan numbers , we obtain two interpretations of Motzkin and Riordan numbers via SSYTs and pattern avoidances respectively, which are new in our knowledge. In 1999, Bernhart \cite{Bernhart} studied the difference array of Catalan numbers and gave the following formulae on Catalan, Riordan, and Motzkin numbers.
\begin{theorem}\label{Bernhart}
{\rm\cite{Bernhart}}
If $C_{n}$, $M_n$, and $R_{n}$ are the $n$-th Catalan, Motzkin, and Riordan numbers respectively, then
\[
C_{n}=\sum_{i=0}^{n}{n\choose i}R_{i},\quad R_{n}=\sum_{i=0}^{n}(-1)^{n-i}{n\choose i}C_{i}, \quad M_n=R_n+R_{n+1}.
\]
\end{theorem}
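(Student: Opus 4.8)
The plan is to reduce everything to the closed-form ordinary generating functions
\[
C(x)=\sum_{n\ge0}C_nx^n=\frac{1-\sqrt{1-4x}}{2x},\quad M(x)=\frac{1-x-\sqrt{1-2x-3x^2}}{2x^2},\quad R(x)=\frac{1+x-\sqrt{1-2x-3x^2}}{2x(1+x)},
\]
all read with the principal branch of the square root (value $1$ at $x=0$) as formal power series. The first observation is that the two binomial identities form a mutually inverse pair under binomial inversion: $C_n=\sum_{k=0}^{n}{n\choose k}R_k$ holds for all $n$ if and only if $R_n=\sum_{k=0}^{n}(-1)^{n-k}{n\choose k}C_k$ holds for all $n$. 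Hence it suffices to prove one of them, and the only genuinely separate work is the Motzkin relation $M_n=R_n+R_{n+1}$.

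For the binomial identity I would use the generating-function form of the binomial transform, namely that $b_n=\sum_{i=0}^{n}{n\choose i}a_i$ is equivalent to $B(x)=\frac{1}{1-x}A\!\left(\frac{x}{1-x}\right)$. Thus $C_n=\sum_i{n\choose i}R_i$ is equivalent to the single functional identity $C(x)=\frac{1}{1-x}R\!\left(\frac{x}{1-x}\right)$, which I would verify by substituting $y=\frac{x}{1-x}$ into the closed form of $R$. The key simplification is that the nested radical collapses: $1-2y-3y^2=\frac{(1-x)^2-2x(1-x)-3x^2}{(1-x)^2}=\frac{1-4x}{(1-x)^2}$, so that $\sqrt{1-2y-3y^2}=\frac{\sqrt{1-4x}}{1-x}$, and a short computation then gives $R(y)=(1-x)C(x)$, i.e. $\frac{1}{1-x}R\!\left(\frac{x}{1-x}\right)=C(x)$, as required.

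For the third identity I would verify the equivalent rational-plus-radical identity $(1+x)R(x)=1+xM(x)$; both sides reduce directly from the closed forms to $\frac{1+x-\sqrt{1-2x-3x^2}}{2x}$, the only cancellation being $1-x-\sqrt{1-2x-3x^2}+2x=1+x-\sqrt{1-2x-3x^2}$. Reading off the coefficient of $x^n$ in $(1+x)R(x)=1+xM(x)$ then yields $R_n+R_{n-1}=M_{n-1}$ for $n\ge1$, which is exactly $M_n=R_n+R_{n+1}$ after reindexing. This step is combinatorially transparent: a Riordan path is precisely a Motzkin path with no flat step on the $x$-axis, so $M_n-R_n$ counts Motzkin paths of length $n$ with at least one flat step on the axis, and one expects such paths to be in bijection with Riordan paths of length $n+1$; I would keep the generating-function verification as the rigorous core and mention the bijection only as an illustration.

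The part I expect to require the most care is the bookkeeping for the two distinct radicals, $\sqrt{1-4x}$ for $C$ and $\sqrt{1-2x-3x^2}$ for $M$ and $R$: one must justify that $\sqrt{1-2y-3y^2}=\sqrt{1-4x}/(1-x)$ holds with the correct branch, i.e. as formal power series both sides have constant term $1$, so the equality of their squares upgrades to equality of the series themselves. Once this is settled the remaining manipulations are routine. As a conceptual remark, the first two identities together say precisely that the Riordan numbers are the leading column of the iterated forward-difference table of the Catalan numbers, since $\sum_{i=0}^{n}(-1)^{n-i}{n\choose i}C_i=(\Delta^nC)(0)$ and $C_n=\sum_{i=0}^{n}{n\choose i}(\Delta^iC)(0)$ is Newton's forward-difference reconstruction; this is exactly Bernhart's difference-array framing and motivates the Catalan--Riordan paths introduced next.
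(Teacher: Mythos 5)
Your proof is correct, but it takes a genuinely different route from the paper. The paper states this theorem as a citation of Bernhart and gives no proof of it at the point where it appears; the relation $M_n=R_n+R_{n+1}$ is simply imported and later used to identify $CR(n+1,1)$ with $M_n$. The two binomial identities, however, are effectively re-proved by the paper in Corollary \ref{CR formula}: there they fall out of the recursion $CR(n,k)=CR(n,k-1)+CR(n-1,k-1)$, which is established bijectively through SSYTs of shape $(n,n)$, together with the boundary values $CR(n,0)=R_n$ and $CR(n,n)=C_n$ and an inclusion--exclusion step. Your argument instead verifies the closed-form generating functions
\[
C(x)=\frac{1-\sqrt{1-4x}}{2x},\qquad R(x)=\frac{1+x-\sqrt{1-2x-3x^2}}{2x(1+x)},\qquad M(x)=\frac{1-x-\sqrt{1-2x-3x^2}}{2x^2},
\]
checks the Euler/binomial-transform identity $C(x)=\frac{1}{1-x}R\!\left(\frac{x}{1-x}\right)$ via the collapse $1-2y-3y^2=\frac{1-4x}{(1-x)^2}$ (with the branch issue correctly handled by comparing constant terms of formal power series), invokes binomial inversion for the second identity, and reads $M_n=R_n+R_{n+1}$ off $(1+x)R(x)=1+xM(x)$; all of these computations are accurate. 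What each approach buys: yours is short, self-contained, and covers all three identities uniformly, including the Motzkin relation that the paper never proves; the paper's combinatorial route is longer but produces the lattice-path and tableau interpretations of the entire difference array $D^{n-k}(C_n)$, which is the actual goal of its Section 3 --- your closing remark about Newton's forward-difference reconstruction is exactly the bridge between the two viewpoints.
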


\begin{definition}[Catalan-Riordan Path]\label{generalized Riordan}
For $0\le k\le n$, let $\mathcal{CR}(n,k)$ be the set of lattice paths of length $n+k$ in the $xy$-plane from $(0,0)$ to $(n+k,0)$ such that
\begin{enumerate}[{\rm (1)}]
\item the first $2k$ steps have the step set  $\{(1,1),(1,-1)\}$;
\item the last $n-k$ steps have the step set $\{(1,2),(1,0),(1,-2)\}$, and no step $(1,0)$ on  the $x$-axis;
\item never go below the $x$-axis.
\end{enumerate}
Members of $\mathcal{CR}(n,k)$  are called Catalan-Riordan paths of size $(n,k)$, and the cardinality of $\mathcal{CR}(n,k)$ is called the $(n,k)$-th Catalan-Riordan number, denoted $CR(n,k)$.
\end{definition}
Note that $\mathcal{CR}(n,n)$ is the set of Dyck paths from $(0,0)$ to $(2n,0)$. When $k=0$, replacing all steps $(1,2)$ and $(1,-2)$ with steps $(1,1)$ and $(1,-1)$ respectively, $\mathcal{CR}(n,0)$ becomes the set of Riordan paths from $(0,0)$ to $(n,0)$. Namely,
\[
CR(n,0)=R_n\quad\And\quad CR(n,n)=C_n.
\]
\begin{theorem}\label{main-2}
There is a bijection between $\mathcal{CR}(n,k)$ and the set of SSYTs of shape $( n,n)$ and type $( 1^{2k},2^{n-k})$, i.e.,
\[
CR(n,k)=K_{(n,n)(1^{2k},2^{n-k})}.
\]
\end{theorem}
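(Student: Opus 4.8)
The plan is to construct the bijection directly by a reading procedure on the tableau, generalizing the classical correspondence between two-row standard Young tableaux and Dyck paths. Let $T$ be an SSYT of shape $(n,n)$ and type $(1^{2k},2^{n-k})$, so that the values $1,\ldots,2k$ each occur once and the values $2k+1,\ldots,n+k$ each occur twice. First I would read the values $v=1,2,\ldots,n+k$ in increasing order and let value $v$ produce the $v$-th step of a path as follows. For $v\le 2k$ (a value occurring once) the step is $(1,1)$ if $v$ lies in the top row and $(1,-1)$ if it lies in the bottom row; for $v>2k$ (a value occurring twice) the step is $(1,2)$, $(1,0)$, or $(1,-2)$ according as the two copies of $v$ lie both in the top row, one in each row, or both in the bottom row. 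By construction the first $2k$ steps lie in $\{(1,1),(1,-1)\}$ and the last $n-k$ steps lie in $\{(1,2),(1,0),(1,-2)\}$, matching conditions (1) and (2) of Definition \ref{generalized Riordan}.

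The central tool is the standard fact that in any SSYT the cells containing entries $\le v$ form a Young subdiagram; for a two-row shape this subdiagram is $\lambda^{(v)}=(t(v),b(v))$, where $t(v)$ and $b(v)$ count the entries $\le v$ in the top and bottom rows, and column-strictness forces $t(v)\ge b(v)$. I would identify the height of the path after its $v$-th step with $t(v)-b(v)$; one checks immediately that each of the three step rules above changes $t(v)-b(v)$ by exactly the prescribed vertical displacement. Then non-negativity of the path is precisely $t(v)\ge b(v)$, and the path returns to the $x$-axis at the end because $\lambda^{(n+k)}=(n,n)$ gives $t(n+k)=b(n+k)=n$. A short parity remark shows $t(2k)-b(2k)$ is even, so the scaled steps keep the path on non-negative even heights, consistent with the endpoint $(n+k,0)$.

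It remains to verify condition (2)'s requirement of no step $(1,0)$ on the $x$-axis, and this is where the argument is most delicate. A level step at height $0$ would come from a doubly-occurring value $v$ with $\lambda^{(v-1)}=(h,h)$ split one copy in each row. But if $\lambda^{(v-1)}$ is the rectangle $(h,h)$, then placing a copy of $v$ in the bottom row requires a strictly smaller entry directly above it, which is impossible since the top row has no filled cell beyond column $h$; hence at height $0$ both copies of $v$ are forced into the top row, producing an ascent $(1,2)$ rather than a level step. This same column-strictness bookkeeping is exactly what makes the construction reversible: reading a path from left to right and placing value $v$ in the leftmost available cell(s) of the designated row(s), a bottom placement (for a single value stepping down, or for the lower copy of a level step) is legal precisely when the current height is $\ge 1$, and a both-bottom placement (a descent of a double value) is legal precisely when the current height is $\ge 2$, which are the very inequalities guaranteeing column-strictness. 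Thus the reading map and the greedy placement map are mutually inverse, giving the desired bijection and hence $CR(n,k)=K_{(n,n)(1^{2k},2^{n-k})}$.
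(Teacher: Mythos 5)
Your proposal is correct and follows essentially the same route as the paper: both encode the tableau by reading values in increasing order, taking the $v$-th step to be $(1, t(v)-b(v)-(t(v-1)-b(v-1)))$ (the paper's $y_s$), both deduce non-negativity from the fact that entries $\le v$ form a two-row Young subdiagram, and both rule out level steps on the $x$-axis by the same column-strictness argument forcing a double value to ascend from a rectangular subshape. Your description of the inverse as a greedy leftmost placement is a slightly more explicit version of the paper's ``invert the construction,'' but it is the same bijection.
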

\begin{proof}
Let $T=(t_{ij})$ be a SSYT of shape $( n^{2})$ and type $( 1^{2k},2^{n-k})$. For $1\le s\le n+k$, suppose the number of $s$ in the first row of $T$ is $y_s$ more than the second row, i.e.,
\[
y_s=\#\{j\in [n]\mid t_{1j}=s\}-\#\{j\in [n]\mid t_{2j}=s\}.
\]
Obviously, $y_s=1,-1$ if $1\le s\le 2k$ and $y_s=2,0,-2$ otherwise. We construct a lattice path $L$ starting at $(0,0)$ such that the $s$-th step of $L$ is $(1,y_s)$ for all $1\le s\le n+k$. We claim that this construction defines a map from the set of SSYTs of shape $( n,n)$ and type $( 1^{2k},2^{n-k})$ to $\mathcal{CR}(n,k)$. Note that all entries $t_{ij}$ with $t_{ij}\le s$ appearing in $T$ form a SSYT whose second row is no longer than the first row, which implies $y_1+\cdots+y_s\ge 0$, i.e., the path $L$ never goes below the $x$-axis. To prove the path $L$ has no step $(1,0)$ on the $x$-axis, suppose $L$ goes to the $x$-axis at step $s\ge2k$, i.e., $y_1+\cdots+y_s=0$. From the definition of $y_s$, the entries $t_{ij}$ with $t_{ij}\le s$ appearing in $T$ form a SSYT whose two rows have the same length. It forces that both $s+1$ must lie in the first row of $T$, i.e., $y_{s+1}=2$. Since two rows of $T$ have the same length, it follows by similar arguments that the path $L$ ends at $(n+k,0)$. One can easily obtain the inverse map by inverting the above constructions and prove that it is well-defined.
\end{proof}
\begin{corollary}[Recursive Formula]\label{Generalized-R-pro}For $1\le k\le n$,
 \begin{eqnarray*}
 CR(n,k)=CR(n,k-1)+CR(n-1,k-1).
\end{eqnarray*}
In particular, $CR(n+1,1)=R_{n+1}+R_{n}=M_{n}$.
\end{corollary}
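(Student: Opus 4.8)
My plan is to pass through Theorem \ref{main-2} to turn this path recursion into an identity among Kostka numbers, and then prove that identity by a bijection on semistandard Young tableaux. By Theorem \ref{main-2} we have $CR(n,k)=K_{(n,n)(1^{2k},2^{n-k})}$, $CR(n,k-1)=K_{(n,n)(1^{2k-2},2^{n-k+1})}$, and $CR(n-1,k-1)=K_{(n-1,n-1)(1^{2k-2},2^{n-k})}$, the three types being compositions of $2n$, $2n$, and $2n-2$ respectively, matching the three shapes. Hence it suffices to prove
\[
K_{(n,n)(1^{2k},2^{n-k})}=K_{(n,n)(1^{2k-2},2^{n-k+1})}+K_{(n-1,n-1)(1^{2k-2},2^{n-k})}.
\]

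To prove this I would partition the SSYTs $T$ of shape $(n,n)$ and type $(1^{2k},2^{n-k})$ according to the position of the entry $2$. Since $k\ge 1$, the values $1$ and $2$ each occur exactly once, $T(1,1)=1$ is forced, and $2$ must lie either at $(2,1)$ or at $(1,2)$; these two alternatives are exhaustive and disjoint and will yield the two summands. If $2$ is at $(2,1)$, I delete the first column of $T$ and subtract $2$ from every remaining entry (all of which are at least $3$). This is a bijection onto the SSYTs of shape $(n-1,n-1)$ and type $(1^{2k-2},2^{n-k})$, invertible by prepending a first column with entries $1$ over $2$ and adding $2$ back. This case therefore contributes the term $CR(n-1,k-1)$.

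If instead $2$ is at $(1,2)$, I merge the two smallest values: set the cell $(1,2)$ equal to $1$ and subtract $1$ from every entry that is at least $3$. The result is a valid SSYT of shape $(n,n)$ in which $1$ now occurs twice, of type $(2,1^{2k-2},2^{n-k})$; since in any SSYT two equal smallest entries must occupy exactly the cells $(1,1)$ and $(1,2)$, the map inverts by adding $1$ to every entry $\ge 2$ and resetting $(1,2)$ to $2$. Because Kostka numbers are invariant under reordering the entries of the type, and $(2,1^{2k-2},2^{n-k})$ is a rearrangement of $(1^{2k-2},2^{n-k+1})$, this case contributes $CR(n,k-1)$, which completes the recursion.

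For the displayed special case I would specialize to $k=1$ with $n$ replaced by $n+1$: then $CR(n+1,1)=CR(n+1,0)+CR(n,0)=R_{n+1}+R_n$, using $CR(m,0)=R_m$ from the remark after Definition \ref{generalized Riordan}, and $R_n+R_{n+1}=M_n$ by Theorem \ref{Bernhart}. The main obstacle I anticipate is precisely the case $2$ at $(1,2)$: unlike the other case it is not a clean deletion of cells, and the resulting type matches the target only after invoking the reordering-invariance of Kostka numbers. On the lattice-path side this is exactly the configuration whose first two (Dyck) steps are ``up--up'', which cannot simply be truncated into a path in $\mathcal{CR}(n,k-1)$, so arguing with tableaux is the cleaner route.
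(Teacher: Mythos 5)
Your proof is correct and follows essentially the same route as the paper: both pass through Theorem \ref{main-2} and split the SSYTs of shape $(n,n)$ and type $(1^{2k},2^{n-k})$ according to whether the unique entry $2$ sits at $(2,1)$ (delete the first column, giving $CR(n-1,k-1)$) or at $(1,2)$ (relabel it as a second $1$ and invoke reordering-invariance of Kostka numbers, giving $CR(n,k-1)$). The only cosmetic difference is that you renormalize the entries after each operation, whereas the paper leaves the types as $(1^{2(k-1)},2^{n-k})$ and $(2,0,1^{2(k-1)},2^{n-k})$ and reorders implicitly.
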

\begin{proof}
From Theorem \ref{main-2},  $CR(n,k)=K_{(n,n)(1^{2k},2^{n-k})}$. Let
\begin{eqnarray*}
\mathcal{T}_{n,k}&=&\{T\mid T~\text{is~a~SSYT~of~shape}~(n,n)~\text{and~type}~(1^{2k},2^{n-k})\}\\
&=&\{T=(t_{ij})\in \mathcal{T}_{n,k}\mid t_{12}=2\}\sqcup\{T=(t_{ij})\in \mathcal{T}_{n,k}\mid t_{21}=2\}.
\end{eqnarray*}
For $T=(t_{ij})\in \mathcal{T}_{n,k}$ with $t_{12}=2$, replacing $t_{12}=1$, one obtains a SSYT of shape $(n,n)$ and type $(2,0,1^{2(k-1)},2^{n-k})$. Thus
\[
\#\{T=(t_{ij})\in \mathcal{T}_{n,k}\mid t_{12}=2\}=K_{(n,n)(1^{2(k-1)},2^{n-(k-1)})}.
\]
For $T=(t_{ij})\in \mathcal{T}_{n,k}$ with $t_{21}=2$, removing $t_{11}$ and $t_{21}$ from $T$, one obtains a SSYT of shape $(n-1,n-1)$ and type $(1^{2(k-1)},2^{n-k})$. Thus
\[
\#\{T=(t_{ij})\in \mathcal{T}_{n,k}\mid t_{21}=2\}=K_{(n-1,n-1)(1^{2(k-1)},2^{n-k})}.
\]
 By Theorem \ref{main-2}, the result holds.
\end{proof}
Indeed, similar as above arguments, if $\alpha=(\alpha_1,\ldots,\alpha_m)$ is a composition of $2n$, we can obtain a recursive formula for $K_{(n^2)\alpha}$,
\[
K_{(n^2)\alpha}=\sum_{i=0}^{\min\{\alpha_1,\alpha_2\}}K_{(n^2)
(\alpha_1+\alpha_2-2i,\alpha_3,\ldots,\alpha_m)}.
\]
Let $D(f_n)=f_n-f_{n-1}$ be the difference of the sequence $f_n$, and $D^k(f_n)=D(D^{k-1}(f_n))$ the $k$-th difference of $f_n$ for $k\ge 2$. It is easily seen that
\[
CR(n,k)=D^{n-k}(C_n),
\]
which gives a combinatorial interpretation on the difference array of Catalan numbers, see OEIS \cite[A059346]{Sloane}.
\begin{corollary}\label{CR formula}If $0\le k\le n$, then
 \[
 CR(n,k)=\sum_{i=0}^{k}{k\choose i}R_{n-i}=\sum_{i=0}^{n-k}(-1)^{i}{n-k\choose i}C_{n-i}.
 \]
 In particular, $C_{n}=\sum_{i=0}^{n}{n\choose i}R_{i}$ and $R_n=\sum_{i=0}^{n}(-1)^{i}{n\choose i}C_{n-i}$.
\end{corollary}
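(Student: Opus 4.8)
The plan is to treat the two equalities separately, deriving the alternating (Catalan) expression from the difference-operator description $CR(n,k)=D^{n-k}(C_n)$ recorded just before the statement, and deriving the binomial (Riordan) expression by induction on $k$ from the recursion of Corollary \ref{Generalized-R-pro}. Neither step needs the path/SSYT machinery of Section 3; both reduce to elementary manipulations, and the ``in particular'' identities will drop out as the boundary values $k=n$ and $k=0$.

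For the second equality I would first record the standard expansion of the backward difference operator. Writing $D=I-E^{-1}$ with $(E^{-1}f)_n=f_{n-1}$, the two commuting operators $I$ and $E^{-1}$ obey the binomial theorem, so
\[
D^{j}(f)_n=\sum_{i=0}^{j}(-1)^{i}\binom{j}{i}f_{n-i},
\]
a formula one may equally verify by a one-line induction on $j$. Taking $j=n-k$ and $f_n=C_n$ and substituting into $CR(n,k)=D^{n-k}(C_n)$ gives
\[
CR(n,k)=\sum_{i=0}^{n-k}(-1)^{i}\binom{n-k}{i}C_{n-i},
\]
which is the claimed alternating sum; every index $n-i$ stays nonnegative since $0\le i\le n-k\le n$.

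For the first equality I would induct on $k$, proving the statement $CR(n,k)=\sum_{i=0}^{k}\binom{k}{i}R_{n-i}$ simultaneously for all $n\ge k$. The base case $k=0$ is the identity $CR(n,0)=R_n$ noted after Definition \ref{generalized Riordan}. For the inductive step, Corollary \ref{Generalized-R-pro} gives $CR(n,k)=CR(n,k-1)+CR(n-1,k-1)$; both summands carry second argument $k-1$ with admissible first arguments (since $n\ge k-1$ and $n-1\ge k-1$), so the induction hypothesis applies to each, yielding
\[
CR(n,k)=\sum_{i=0}^{k-1}\binom{k-1}{i}R_{n-i}+\sum_{i=1}^{k}\binom{k-1}{i-1}R_{n-i}
\]
after shifting the index of the second sum by replacing $i$ with $i-1$. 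Combining the two sums term by term through Pascal's rule $\binom{k-1}{i}+\binom{k-1}{i-1}=\binom{k}{i}$, with the conventions $\binom{k-1}{-1}=\binom{k-1}{k}=0$, collapses the right-hand side to $\sum_{i=0}^{k}\binom{k}{i}R_{n-i}$ and closes the induction.

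The displayed special cases then follow at the boundaries: setting $k=n$ in the first equality and using $\binom{n}{i}=\binom{n}{n-i}$ recovers $C_n=\sum_i\binom{n}{i}R_i$, while setting $k=0$ in the second equality is directly $R_n=\sum_i(-1)^{i}\binom{n}{i}C_{n-i}$; both are Bernhart's relations of Theorem \ref{Bernhart}. The only delicate point is the bookkeeping of the induction in the first equality: because the recursion term $CR(n-1,k-1)$ lowers $n$ as well as $k$, the statement must be quantified over all admissible $n$ before inducting on $k$, so that this term already lies within the hypothesis. This is the sole obstacle, and it is dispatched simply by fixing the quantifier order from the outset.
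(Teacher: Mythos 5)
Your proof is correct and follows essentially the same route as the paper: both arguments rest entirely on the recursion of Corollary \ref{Generalized-R-pro} together with the boundary values $CR(n,0)=R_n$ and $CR(n,n)=C_n$, the paper packaging your induction-with-Pascal step as the iterated identity $CR(n,k)=\sum_{i=0}^{l}\binom{l}{i}CR(n-i,k-l)$ specialized to $l=k$, and your expansion of $(I-E^{-1})^{n-k}$ as binomial (``inclusion--exclusion'') inversion of the case $k=n$. The one point to tighten is that your alternating sum leans on $CR(n,k)=D^{n-k}(C_n)$, which the paper merely asserts as ``easily seen'' and which is logically the very formula you are proving; to keep the argument self-contained, first derive that identity by the one-line induction on $n-k$ from $CR(n,k)=CR(n,k+1)-CR(n-1,k)$ (the recursion read backwards), or else obtain the alternating sum by inverting the $k=n$ case of the iterated identity as the paper does.
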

\begin{proof}
From Corollary \ref{Generalized-R-pro}, we have for any $1\le l\le k\le n$,
\[
CR(n,k)=\sum_{i=0}^{l}{l\choose i}CR(n-i,k-l).
\]
Recall that $CR(n,0)=R_{n}$, taking $l=k$ in above equation, we have $CR(n,k)=\sum_{i=0}^{k}{k\choose i}R_{n-i}$. Since $CR(n,n)=C_{n}$, we have
\[
C_{n}=\sum_{i=0}^{l}{l\choose i}CR(n-i,n-l)\quad \text{for~all}~n\ge 1.
\]
Via the inclusion-exclusion principle, we have
\[
CR(n,n-l)=\sum_{i=0}^{l}(-1)^{i}{l\choose i}C_{n-i}.
\]
\end{proof}
As an easy consequence of Theorem \ref{main-1}, Theorem \ref{main-2}, and Corollary \ref{CR formula}, below we obtain three combinatorial interpretations on the difference array of Catalan numbers, via lattice paths, SSYTs, and pattern avoidances. Note $CR(n+1,1)=M_{n}$ and $CR(n,0)=R_n$. Analogous to Catalan numbers (\ref{Catalanformula}), we present two new combinatorial interpretations for Morzkin and Riordan numbers.
\begin{theorem}
Let $\alpha=(1^{m_1},2^{n_1})$ and $\beta=(1^{m_2},2^{n_2})$ be two compositions of $n$, and assume $m_1+m_2=2k$.
We have for any $\pi\in S_3$,
\[
D^{n-k}(C_n)=CR(n,k)=K_{(n,n)(1^{2k},2^{n-k})}=|S_{\alpha\beta}(\pi)|.
\]
In particular,
\begin{eqnarray}
M_n=K_{((n+1)^{2})(1^2,2^n)}\quad&\And& \quad R_n=K_{( n^{2})(2^n)},\nonumber\\
M_{2n}=|S_{(1,2^n)(1,2^n)}(\pi)|\quad&\And&\quad R_{2n}=|S_{(2^n)(2^n)}(\pi)|.\label{bijections}
\end{eqnarray}
\end{theorem}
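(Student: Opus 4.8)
The plan is to read the asserted chain of equalities directly off the results already established, supplying only the one genuinely new link—the passage from the Kostka number to the pattern-avoidance count. The first two equalities need no new argument: $D^{n-k}(C_n)=CR(n,k)$ is the identity recorded just before Corollary~\ref{CR formula}, and $CR(n,k)=K_{(n,n)(1^{2k},2^{n-k})}$ is exactly Theorem~\ref{main-2}. So the crux is to verify
\[
K_{(n,n)(1^{2k},2^{n-k})}=|S_{\alpha\beta}(\pi)|
\]
for $\alpha=(1^{m_1},2^{n_1})$ and $\beta=(1^{m_2},2^{n_2})$ with $m_1+m_2=2k$.

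First I would invoke Theorem~\ref{main-1} to write $|S_{\alpha\beta}(\pi)|=K_{(n,n)(\alpha,\beta)}$, where the concatenated type is $(\alpha,\beta)=(1^{m_1},2^{n_1},1^{m_2},2^{n_2})$. The key arithmetic step is that, since $\alpha$ and $\beta$ are both compositions of $n$, we have $m_1+2n_1=m_2+2n_2=n$; adding these and using $m_1+m_2=2k$ gives $n_1+n_2=n-k$. Hence the multiset of parts of $(\alpha,\beta)$ consists of $2k$ ones and $n-k$ twos, i.e. it is a reordering of $(1^{2k},2^{n-k})$. Because Kostka numbers do not depend on the order of the parts of the type—the invariance recalled in Section~1 and already exploited in Lemma~\ref{321-lemma}—we get $K_{(n,n)(\alpha,\beta)}=K_{(n,n)(1^{2k},2^{n-k})}$, which closes the chain.

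It then remains to specialize to the four displayed identities, where the only care needed is parameter bookkeeping. For $R_n=K_{(n^2)(2^n)}$ I would take $k=0$, so that $(1^{2k},2^{n-k})=(2^n)$ and $CR(n,0)=R_n$ by the remark following Definition~\ref{generalized Riordan}. For $R_{2n}=|S_{(2^n)(2^n)}(\pi)|$ I would apply the main identity with path parameters $(2n,0)$: here $(2^n)$ is a composition of $2n$, so $m_1=m_2=0$ forces $k=0$ and the count equals $CR(2n,0)=R_{2n}$. For $M_n=K_{((n+1)^2)(1^2,2^n)}$ I would use Theorem~\ref{main-2} with parameters $(n+1,1)$, giving $K_{(n+1,n+1)(1^2,2^n)}=CR(n+1,1)=M_n$ by Corollary~\ref{Generalized-R-pro}. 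Finally $M_{2n}=|S_{(1,2^n)(1,2^n)}(\pi)|$ follows from the main identity with parameters $(2n+1,1)$: each of $(1,2^n)$ is a composition of $2n+1$ with $m_1=m_2=1$, so $k=1$ and the count equals $CR(2n+1,1)=M_{2n}$.

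The proof carries essentially no analytic difficulty, since every equality is either a citation or a reindexing. The one place I would be most careful—and what I regard as the main (modest) obstacle—is precisely this parameter matching in the ``in particular'' cases: it is easy to conflate the integer $n$ in the general statement with the argument of $M$ or $R$, whereas the correct substitutions are $(2n,0)$, $(n+1,1)$, and $(2n+1,1)$ respectively. Confirming that each prescribed pair $(\alpha,\beta)$ really is a pair of compositions of the intended total, and that $m_1+m_2$ yields the intended $k$, is the only step that genuinely needs checking.
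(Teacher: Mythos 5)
Your proposal is correct and follows exactly the route the paper intends: the paper states this theorem without a written proof, presenting it as "an easy consequence" of Theorem \ref{main-1}, Theorem \ref{main-2}, and the identity $CR(n,k)=D^{n-k}(C_n)$, and your argument supplies precisely the missing links (the count $n_1+n_2=n-k$, the reordering-invariance of the Kostka number, and the parameter substitutions $(2n,0)$, $(n+1,1)$, $(2n+1,1)$ for the special cases).
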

In \cite[Section 1.5]{Stanley-Vol1}, some classic geometric constructions on permutation matrices  can establish a direct bijection between $S_n(\pi)$ and Dyck paths of length $2n$ for each $\pi\in S_3$. It would be really interesting if we can find analogous constructions on integral matrices to obtain geometric bijections from $S_{(1,2^n)(1,2^n)}(\pi)$ to Motzkin paths of length $2n$, and from $S_{(2^n)(2^n)}(\pi)$ to Riordan paths of length $2n$.

\section{Extension of Lewis's Construction}
Let  $\alpha=(\alpha_{1},\ldots,\alpha_{n})$ and $\beta=(\beta_{1},\ldots,\beta_{m})$ be two compositions of $N$. In the rest of this paper, $m$ and $n$ can be any integer $m'$ and $n'$ with $m'\ge m$ and $n'\ge n$ respectively only if we take $\alpha_i=\beta_j=0$ for $n<i\le n'$ and $m<j\le m'$.  For any $d\in\Bbb{N}$, denote
\begin{eqnarray*}
\mathfrak{S}_{d}(\alpha,\beta)&=&\{(P,Q)\mid \sh(P)=\sh(Q)\subseteq (d^m), \type(P)=\alpha, \type(Q)=\beta \},\\
\mathfrak{T}_{d}(\alpha,\beta)&=&\{T\mid \sh(T)=(d^{m}), \type(T)=(\alpha,d-\bar{\beta})\},
\end{eqnarray*}
where $\sh(P)$ and $\type(P)$ are the shape and type of $P$, $d-\bar{\beta}=(d-\beta_m,\dots, d-\beta_1)$. Define a map
\[
\theta:\; \mathfrak{S}_{d}(\alpha,\beta)\to\mathfrak{T}_{d}(\alpha,\beta)
\]
such that if $P=(P_{ij})$ and $Q=(Q_{ij})$ are two SSYTs of the same shape $\lambda=(\lambda_1,\lambda_2,\ldots)$ with $\lambda_1\le d$ and of types $\alpha$ and $\beta$ respectively, then $\theta(P,Q)=T=(T_{ij})$ is defined as follows, for $1\le k\le m$,
\begin{eqnarray*}\label{theta_d}
T_{ij}=\left\{
  \begin{array}{ll}
   P_{ij}& \quad \If 1\le i\le l(\lambda), \; 1\le j\le \lambda_i; \\
   n+k&\quad\If k\le i\le m, \; \lambda_{i-k+1}(m-k+1)<j\le \lambda_{i-k}(m-k),
  \end{array}
\right.
\end{eqnarray*}
where $\lambda_0(k)=d$ and $\lambda_i(k)$ is the maximal $j$ such that $Q_{ij}\le k$, i.e.,
\begin{equation*}\label{lambda_i(k)}
\lambda_i(k)=\#\{j\mid Q_{ij}\le k \}.
\end{equation*}
Indeed, from the proof of Theorem \ref{bijection on SSYT}, we will see that the map $\theta$ can be realized by the following algorithm which extends those constructions by Lewis \cite{Lewis2011}, Mei and Wang \cite{Mei}.
\begin{algorithm}\label{algorithm}
We start with a Young diagram of shape $(d^{m})$ whose upper-left corner of shape $\lambda$ is filled with $P$. Next we fill the remaining empty boxes of shape $(d^{m})/\lambda$ by $m$ steps. At the first step, we fill the number $n+m$ into each box of the rightmost $d-\beta_1$ boxes of the bottom row. Generally at the $i$-th step for $2\le i\le m$, for those remaining empty boxes of the diagram $(d^{m})$ after step $i-1$, the bottom one of those $d-\beta_i$ columns where $Q$ contains no $i$ is filled by the entry $m+n+1-i$. After the $m$-th step, we obtain $\theta(P,Q)$.
\end{algorithm}
\begin{example}
Let $P$ and $Q$ be two SSYTs  with  $\sh(P)=\sh(Q)=(4,3,2)$, $\type(P)=(2,2,3,2)$, and  $\type(Q)=(3,2,1,1,2)$ as follows,
\[
\begin{array}{c}P=\\{}\\{}\\{}\\{}\\\end{array}
\young(1123,234,34)\hspace{2mm}
\begin{array}{c}and\\{}\\{}\\{}\\{}\\\end{array}
\begin{array}{c}Q=\\{}\\{}\\{}\\{}\\\end{array}
\young(1115,224,35).\vspace{-10mm}
\]
Taking $d=5$, then we fill the Young diagram of shape $(5^{5})$ step by step as follows,
\[
\begin{array}{|c|c|c|c|c|c|c|}\hline ~&~ &~ &~&~ \\\hline ~& ~&~ &~ &~ \\\hline ~& ~&~ &~ &~\\\hline ~& ~& ~&~&~ \\\hline ~&~ &~ &~ &~\\\hline\end{array}
\rightarrow
\begin{array}{|c|c|c|c|c|c|c|}\hline 1&1&2&3&~ \\\hline 2& 3&4 &~& \\\hline 3&4 & & & \\\hline & & ~&~& \\\hline & & &&\\\hline\end{array}
\rightarrow
\begin{array}{|c|c|c|c|c|c|c|}\hline 1&1&2&3& \\\hline 2& 3& 4& &\\\hline 3&4 & && \\\hline & & && \\\hline & & &9&9\\\hline\end{array}
\rightarrow
\begin{array}{|c|c|c|c|c|c|c|}\hline 1& 1 &2 &3 &\\\hline 2& 3& 4&& \\\hline 3& 4& &&\\\hline & & &8&8 \\\hline & & 8&9&9\\\hline\end{array}
\]
\[
\rightarrow
\begin{array}{|c|c|c|c|c|c|c|}\hline 1 & 1 &2 &3 &\\\hline 2 & 3& 4&&\\\hline3 &4 & &7&7\\\hline & & 7&8&8 \\\hline & 7& 8&9&9\\\hline\end{array}
\rightarrow
\begin{array}{|c|c|c|c|c|c|c|}\hline 1 & 1 & 2& 3&\\\hline 2 &3 &4 &6&6 \\\hline 3 & 4&  & 7&7\\\hline & 6&7 &8&8\\\hline 6& 7&8 &9&9\\\hline\end{array}
\rightarrow
\begin{array}{|c|c|c|c|c|c|c|}\hline 1 & 1 & 2& 3&5\\\hline 2 &3 &4 &6 &6 \\\hline 3 & 4& 5 & 7&7\\\hline 5& 6&7 &8&8\\\hline 6& 7&8 &9&9\\\hline\end{array}=\theta(P,Q).
\]
On the other hand, by the definition of $\lambda_i(k)$, we have
\begin{eqnarray*}
&&\lambda_1(1)=\lambda_1(2)=\lambda_1(3)=\lambda_1(4)=3,\lambda_1(5)=4,\\
&&\lambda_2(1)=0, \lambda_2(2)=\lambda_2(3)=2,\lambda_2(4)=\lambda_2(5)=3,\\
&&\lambda_3(1)=\lambda_3(2)=0,\lambda_3(3)=\lambda_3(4)=1,\lambda_3(5)=2,\\
&&\lambda_0(k)=5\quad \And \quad\lambda_4(k)=\lambda_5(k)=0.
\end{eqnarray*}
By the definition of $\theta$, we have
$T_{ij}=4+k$ iff  $\lambda_{i-k+1}(5-k+1)<j\le \lambda_{i-k}(5-k).$
So
\begin{eqnarray*}
T_{ij}=5 \quad{\rm iff}\quad \lambda_{i}(5)<j\le \lambda_{i-1}(4)\quad&\Longrightarrow&\quad T_{15}=T_{33}=T_{41}=5,\\
T_{ij}=6 \quad{\rm iff}\quad \lambda_{i-1}(4)<j\le \lambda_{i-2}(3)\quad&\Longrightarrow&\quad T_{51}=T_{42}=T_{24}=T_{25}=6,\\
T_{ij}=7 \quad{\rm iff}\quad \lambda_{i-2}(3)<j\le \lambda_{i-3}(2)\quad&\Longrightarrow&\quad T_{52}=T_{43}=T_{34}=T_{35}=7,\\
T_{ij}=8 \quad{\rm iff}\quad \lambda_{i-3}(2)<j\le \lambda_{i-4}(1)\quad&\Longrightarrow&\quad T_{53}=T_{44}=T_{45}=8,\\
T_{ij}=9 \quad{\rm iff}\quad \lambda_{i-4}(1)<j\le \lambda_{i-5}(0)\quad&\Longrightarrow&\quad T_{54}=T_{55}=9.
\end{eqnarray*}
\end{example}

\begin{theorem}\label{bijection on SSYT}
The map $\theta$ is a bijection.
\end{theorem}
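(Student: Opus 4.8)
The plan is to prove that $\theta$ is a bijection by computing, for every $T=\theta(P,Q)$, the shapes of the nested sub-tableaux formed by its small entries, and to use these to verify simultaneously that $\theta$ is well-defined (lands in $\mathfrak{T}_{d}(\alpha,\beta)$) and invertible. Throughout I would encode $Q$ by its chain of sub-shapes: for $0\le k\le m$ let $\lambda(k)$ be the Young diagram formed by the cells of $Q$ with entry $\le k$, so its $i$-th row length is exactly $\lambda_i(k)=\#\{j\mid Q_{ij}\le k\}$. Since $Q$ is column-strict we have $Q_{i1}\ge i$, hence $\lambda_i(k)=0$ whenever $i>k$; and because $Q$ is an SSYT, the chain $\emptyset=\lambda(0)\subseteq\lambda(1)\subseteq\cdots\subseteq\lambda(m)=\lambda$ has horizontal-strip differences with $|\lambda(k)/\lambda(k-1)|=\beta_k$. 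This chain is equivalent data to $Q$.

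First I would show $\theta$ is well-defined. For $0\le k\le m$ let $R(k)$ be the set of cells of $T$ with entry $\le n+k$ and let $\rho_i(k)$ be the length of its $i$-th row. Reading off the ranges in the definition of $\theta$, for $k\le i\le m$ the cells of value exactly $n+k$ in row $i$ occupy the columns $\lambda_{i-k+1}(m-k+1)<j\le\lambda_{i-k}(m-k)$; since the right end for value $n+k$ equals the left end for value $n+k+1$, these blocks are contiguous and stack up rightward, which yields the closed form
\[
\rho_i(k)=\lambda_{i-k}(m-k)\ \ \If k\le i,\qquad \rho_i(k)=d\ \ \Otherwise,
\]
with $\rho_i(0)=\lambda_i(m)=\lambda_i$. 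Everything then follows: $\rho_i(k)$ is weakly decreasing in $i$ (it is a partition row-length for $i>k$, and equals $d$ for $i\le k$), so each $R(k)$ is a genuine Young diagram; the difference $R(k)/R(k-1)$ is a horizontal strip because $\rho_i(k)=\lambda_{i-k}(m-k)\le\lambda_{i-k}(m-k+1)=\rho_{i-1}(k-1)$, using $\lambda(m-k)\subseteq\lambda(m-k+1)$; its size is $d-\beta_{m+1-k}$ by a telescoping sum over $i$ (using $\lambda_i(a)=0$ for $i>a$ to identify $\sum_i\lambda_i(a)=|\lambda(a)|=\beta_1+\cdots+\beta_a$); and $R(m)=(d^m)$ since $\rho_i(m)=d$ for all $i$. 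As the entries $\le n$ fill $R(0)=\lambda$ with $P$, which is an SSYT of type $\alpha$, we conclude that $T$ is an SSYT of shape $(d^m)$ and type $(\alpha,d-\bar\beta)$, i.e. $T\in\mathfrak{T}_{d}(\alpha,\beta)$.

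Next I would build the inverse. Given any $T\in\mathfrak{T}_{d}(\alpha,\beta)$, the cells with entry $\le n$ automatically form a Young diagram $\lambda$ (in each row they form a prefix, and column-strictness forces the prefix lengths to be weakly decreasing), so $P:=T|_{\lambda}$ is an SSYT of type $\alpha$. For the remaining entries I set $\rho_i(k)$ to be the $i$-th row length of the cells with entry $\le n+k$ and recover the chain by $\lambda_r(s):=\rho_{\,r+m-s}(m-s)$, equivalently $Q$. The hypotheses on $T$ guarantee this is a legitimate SSYT $Q$ of shape $\lambda$ and type $\beta$: the fact that $T$ is an SSYT makes each $R(k)$ a Young diagram and each $R(k)/R(k-1)$ a horizontal strip, and the type $(\alpha,d-\bar\beta)$ forces these strips to have the correct sizes. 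By construction $\theta(P,Q)=T$, and conversely the closed form for $\rho_i(k)$ shows that the chain extracted from $\theta(P,Q)$ is the original chain of $Q$ while $T|_{\lambda}=P$; hence the two maps are mutually inverse.

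The main obstacle is the derivation of the closed form $\rho_i(k)=\lambda_{i-k}(m-k)$ from the shifted index ranges in the definition of $\theta$. The $180^{\circ}$-rotation built into the construction produces the one-row shift between the tableau row index $i$ and the row index $i-k$ of the chain of $Q$, and one must check carefully that the value blocks in each row are contiguous, begin exactly at column $\lambda_i+1$, and terminate at column $d$; this is precisely where column-strictness of $Q$ (the vanishing $\lambda_i(k)=0$ for $i>k$) is indispensable. Once this closed form is in hand, the Young-diagram property, the horizontal-strip property, the type count, and the reconstruction of the chain are all immediate, so this single bookkeeping step carries the whole proof.
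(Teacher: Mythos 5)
Your proposal is correct and follows essentially the same route as the paper: your $\rho_i(k)$ is exactly the paper's $\mu_i(n+k)$, your closed form $\rho_i(k)=\lambda_{i-k}(m-k)$ is the index-shift identity underlying the paper's computations, and your ``Young diagram / horizontal strip / strip size'' conditions are just a repackaging of the paper's inequalities $\lambda_i(k)\ge\lambda_{i+1}(k+1)$ and $\mu_i(s)\ge\mu_{i+1}(s+1)$ together with its shape and type counts. The one step you flag but do not carry out --- that the value blocks in each row genuinely tile $(\lambda_i,d]$, which needs the monotonicity $\lambda_{r+1}(s+1)\le\lambda_r(s)$ and not only the vanishing $\lambda_i(k)=0$ for $i>k$ --- is precisely the short column-strictness argument the paper supplies, so your plan is sound.
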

\begin{proof}First we prove that $\theta$ is well-defined. Suppose $\theta(P,Q)=T=(T_{ij})$ for $P, Q\in \mathfrak{S}_{d}(\alpha,\beta)$. We need to show that $T$ is a SSYT of shape $(d^m)$ and type $(\alpha,d-\bar{\beta})$.
By the definition of $\lambda_i(k)$, we have $Q_{(i+1)(\lambda_{i}(k)+1)}> Q_{i(\lambda_i(k)+1)}\ge k+1\ge Q_{(i+1)\lambda_{i+1}(k+1)}$
which implies $\lambda_{i}(k)+1> \lambda_{i+1}(k+1)$, i.e., \[\lambda_{i}(k)\ge \lambda_{i+1}(k+1).\] Given $k\le i\le m$, since
\begin{eqnarray}
\{j\mid T_{ij}=n+k\}&=&\{j\mid \lambda_{i-k+1}(m-k+1)<j\le \lambda_{i-k}(m-k)\},\nonumber\\
\{j\mid T_{ij}=n+k+1\}&=&\{j\mid \lambda_{i-k}(m-k)<j\le \lambda_{i-k-1}(m-k-1)\},\label{T_ij}\\
......\quad\quad\quad&&\nonumber
\end{eqnarray}
we have $\{j\mid T_{ij}\ge n+k\}=\{j\mid j> \lambda_{i-k+1}(m-k+1)\}$. Thus for $1\le i\le m-k+1$,
\[
\{j\mid T_{(i+k-1)j}\ge n+k\}=\{j\mid \lambda_{i}(m-k+1)<j\le d\}.
\]
Taking $k=1$, we obtain
\[
\{(i,j)\mid T_{ij}\ge n+1\}=\{(i,j)\mid \lambda_{i}(m)=\lambda_i<j\le d\},
\]
which implies that the shape of $T$ is $(d^{m})$. To prove that the type of $T$ is $(\alpha,d-\bar{\beta})$, since $\lambda_i(k)$ is the maximal $j$ such that $Q_{ij}\le k$, it follows that $Q_{ij}=k$ iff $\lambda_{i}(k-1)<j\le \lambda_i(k)$. Thus for $1\le i\le m-k+1$,
\[
\{j\mid Q_{ij}=m-k+1\}=\{j\mid\lambda_{i}(m-k)< j\le \lambda_{i}(m-k+1)\}.
\]
By (\ref{T_ij}),
\[
\{j\mid T_{(i+k-1)j}=n+k\}=\{j\mid \lambda_{i}(m-k+1)<j\le \lambda_{i-1}(m-k)\}.
\]
Then for $1\le i\le m-k+1$, we have
\[
\#\{j\mid Q_{ij}=m-k+1\}+\#\{j\mid T_{(i+k-1)j}=n+k\}=\lambda_{i-1}(m-k)-\lambda_i(m-k).
\]
Note that $\lambda_i(k)=0$ for $i>k$. Then $\sum_{i=1}^m(\lambda_{i-1}(m-k)-\lambda_i(m-k))=d$. For $i>m-k+1$, there is no $j\in [d]$ such that $Q_{ij}=m-k+1$ and $T_{(i+k-1)j}=n+k$.
If each pair $(i,j)$ with $T_{(i+k-1)j}=n+k$ for $1\le i\le m-k+1$ is substituted by the pair $(i,j)$ with $T_{ij}=n+k$ for $k\le i\le m$, then
\[
\#\{(i,j)\mid T_{ij}=n+k\}+\#\{(i,j)\mid Q_{ij}=m-k+1\}=d.
\]
So $T$ is of shape $(\alpha,d-\bar{\beta})$. For any $k\le i\le m$, it follows by (\ref{T_ij}) that in each row of $T$, the boxes with entry $n+k+1$ are strictly right of the boxes with entry $n+k$, i.e., $T$ is weakly increasing across each row. It remains to show that $T$ is strictly increasing down each column. Suppose there exist $i$ and $j$ such that $T_{ij}=n+k$ and $T_{(i+1)j}=n+k'$ with $k>k'$. Since $\lambda_{i}(k)\ge \lambda_{i+1}(k+1)$, we have
\[
\lambda_{i-k'+1}(m-k')\le\lambda_{i-k+1}(m-k)\le \lambda_{i-k+1}(m-k+1).
\]
Via the definition of $T_{ij}$,
\[
j\in \{j\mid\lambda_{i-k+1}(m-k+1)<j\le \lambda_{i-k}(m-k)\}\cap\{j\mid\lambda_{i-k'+2}(m-k'+1)<j\le \lambda_{i-k'+1}(m-k')\},
\]
which is a contradiction. Thus $T\in \mathfrak{T}_{d}(\alpha,\beta)$ and the map $\theta$ is well-defined.

To prove $\theta$ is a bijection, it is enough to show that for any $T\in \mathfrak{T}_{d}(\alpha,\beta)$, there is a unique pair $(P,Q)\in \mathfrak{S}_{d}(\alpha,\beta)$ such that $\theta(P,Q)=T$. Given $T\in \mathfrak{T}_{d}(\alpha,\beta)$, define $P=(P_{ij})$ as
\[
P_{ij}=T_{ij} \quad\If 1\le T_{ij}\le n.
\]
It is obvious that $P$ is uniquely determined by $T$. Since $T$ is of shape $(d^m)$ and type $(\alpha,d-\bar{\beta})$, we have  $\sh(P)\subseteq (d^m)$ and $\type(P)=\alpha$. Denote by $\lambda=(\lambda_1,\lambda_2,\ldots)$ the shape of $P$ and assume $\lambda_i=0$ if $i> l(\lambda)$.
For $i\in [m]$ and $s\in [m+n]$, let
\[
\mu_{i}(s)=\#\{j\mid T_{ij}\le s\}.
\]
If $i\in [m]$, $\mu_{i}(n)=\lambda_{i}$ and $\mu_i(n+k)=d$ for $i\le k$. Let $\mu_{i}(s)=0$ if $i>m$.
Now we construct $Q=(Q_{ij})$ as follows, for $k\in [m]$,
\[
Q_{ij}=m-k+1 \quad\If 1\le i\le m-k+1,\;\mu_{k+i}(n+k)<j\le\mu_{k+i-1}(n+k-1).
\]
To prove that $Q$ is a SSYT of shape $\lambda$ and type $\beta$, the arguments are similar as before. Since $T_{(i+1)(\mu_{i}(s)+1)}>T_{i(\mu_{i}(s)+1)}\ge s+1\ge T_{(i+1)\mu_{i+1}(s+1)}$, we have $\mu_{i}(s)+1>\mu_{i+1}(s+1)$, i.e.,
\[
\mu_{i}(s)\ge \mu_{i+1}(s+1).
\]
Since
\begin{eqnarray*}
\{j\mid Q_{ij}\le m\}&=&\bigsqcup_{k=1}^m\{j\mid Q_{ij}=m-k+1\}\\
&=&\bigsqcup_{k=1}^m\{j\mid \mu_{k+i}(n+k)<j\le\mu_{k+i-1}(n+k-1)\}\\
&=&\{j\mid 1\le j\le\lambda_i\},
\end{eqnarray*}
it follows that $Q$ has shape $\lambda$. To prove $\type(Q)=\beta$, since $\mu_i(s)$ is the maximal $j$ such that $T_{ij}\le s$, it follows that $T_{ij}=s$ iff $\mu_{i}(s-1)<j\le \mu_i(s)$. Thus for $k\le i\le m$,
\[
\{j\mid T_{ij}=n+k\}=\{j\mid\mu_{i}(n+k-1)< j\le \mu_{i}(n+k)\}.
\]
By the construction of $Q_{ij}$,
\[
\{j\mid Q_{(i-k)j}=m-k+1\}=\{j\mid \mu_{i}(n+k)<j\le \mu_{i-1}(n+k-1)\}.
\]
For $k\le i\le m+1,$ we have
\[
\#\{j\mid T_{ij}=n+k\}+\#\{j\mid Q_{(i-k)j}=m-k+1\}=\mu_{i-1}(n+k-1)-\mu_i(n+k-1).
\]
Note that $\mu_{m+1}(k)=0$, $\mu_i(n+k)=d$ for $i\le k$. Then $\sum_{i=k}^{m+1}(\mu_{i-1}(n+k-1)-\mu_i(n+k-1))=d$. For $i<k$, there is no $j\in [d]$ such that $Q_{(i-k)j}=m-k+1$ and $T_{ij}=n+k$. Similar as before, we have
\[
\#\{(i,j)\mid T_{ij}=n+k\}+\#\{(i,j)\mid Q_{ij}=m-k+1\}=d.
\]
This proves that $Q$ is of type $\beta$. It remains to show that $Q$ is a SSYT.
Note that
\begin{eqnarray*}
\{j\mid Q_{ij}=m-k+1\}&=&\{j\mid \mu_{k+i}(n+k)<j\le \mu_{k+i-1}(n+k-1)\},\\
\{j\mid Q_{ij}=m-k\}&=&\{j\mid \mu_{k+i+1}(n+k+1)<j\le \mu_{k+i}(n+k)\}.
\end{eqnarray*}
It follows that in the $i$-th row of $Q$, the boxes filled with the number $m-k$ is strictly left of the boxes filled with the number $m-k+1$, which means $Q$ is weakly increasing across each row. Suppose there exist $i$ and $j$ such that $Q_{ij}=m-k+1$ and $Q_{(i+1)j}=m-k'+1$ with $k<k'$. Then we have
\[
\mu_{k'+i+1}(n+k')<j\le \mu_{k'+i}(n+k'-1)\And \mu_{k+i}(n+k)<j\le \mu_{k+i-1}(n+k-1),
\]
in contradiction with
\[
\mu_{k'+i}(n+k'-1)\le \mu_{k'+i}(n+k')\le \mu_{k+i}(n+k).
\]
Therefore $Q$ is a SSYT and the map $\theta$ is a bijection.
\end{proof}
Indeed, we can obtain $(P,Q)$ from $T$ by inverting Algorithm \ref{algorithm}.
\begin{example}
Let $T$ be a SSYT of shape $( 5^{5})$ and type $(\alpha,5-\beta)=(2,2,3,2,3,4,4,3,2)$ for $\alpha=(2,2,3,2)$ and $\beta=(2,1,1,2,3)$. We construct a pair $(P,Q)$ of SSYTs of the same shape from $T$  as follows,
\[
\begin{array}{c}T=\\{}\\{}\\{}\\{}\\\end{array}
\young(11235,23466,34577,56788,67899)
\begin{array}{c} \longrightarrow \\{}\\{}\\{}\\\end{array}
\begin{array}{c}P=\\{}\\{}\\{}\\{}\\\end{array}
\young(1123,234,34)
\begin{array}{c} \longrightarrow \\{}\\{}\\{}\\\end{array}
\young({}{}{}{}{},{}{}{}{},{}{}{})
\vspace{-10mm}
\]
\[
\begin{array}{c} \rightarrow \\{}\\{}\\\end{array}
\young(111{}{},{}{}{}{},{}{}{})
\hspace{0mm}
\begin{array}{c} \rightarrow \\{}\\{}\\\end{array}
\young(111{}{},22{}{},{}{}{})
\hspace{0mm}
\begin{array}{c} \rightarrow \\{}\\{}\\\end{array}
\young(111{}{},22{}{},3{}{})
\hspace{0mm}
\begin{array}{c} \rightarrow \\{}\\{}\\\end{array}
\young(111{}{},224{},3{}{})
\hspace{0mm}
\begin{array}{c} \rightarrow \\{}\\{}\\\end{array}
\begin{array}{c}Q=\\{}\\{}\\{}\\{}\\\end{array}
\young(1115,224,35).
\]
\end{example}
Note that if $\type(P)=(\alpha_1,\ldots,\alpha_n)$ and $\type(Q)=(\beta_1,\ldots,\beta_m)$, $\sh(P)=\sh(Q)\subseteq (d^m)$ iff $\sh(P)=\sh(Q)\subseteq (d^n)$. By the definition of $\mathfrak{S}_{d}(\alpha,\beta)$, we have
\[
\left|\mathfrak{S}_{d}(\beta,\alpha)\right|=\left|\mathfrak{S}_{d}(\alpha,\beta)\right|=\sum_{\lambda\vdash N;\lambda\subseteq (d^m)}K_{\lambda\alpha}K_{\lambda\beta}.
\]
By Theorem \ref{bijection on SSYT}, we have
\begin{eqnarray*}
&&\left|\mathfrak{S}_{d}(\alpha,\beta)\right|= \left|\mathfrak{T}_{d}(\alpha,\beta)\right|=K_{(d^m)(\alpha,d-\bar{\beta})}
=K_{(d^m)(\alpha,d-\beta)},\\
&&\left|\mathfrak{S}_{d}(\beta,\alpha)\right|= \left|\mathfrak{T}_{d}(\beta,\alpha)\right|=K_{(d^n)(\beta,d-\bar{\alpha})}
=K_{(d^n)(\beta,d-\alpha)}.
\end{eqnarray*}

\begin{corollary}Let  $\alpha=(\alpha_{1},\ldots,\alpha_{n})$ and $\beta=(\beta_{1},\ldots,\beta_{m})$ be two compositions of $N$. Suppose $m\le n$. We have
\[
\sum_{\lambda\vdash N;\lambda\subseteq (d^m)}K_{\lambda\alpha}K_{\lambda\beta}=K_{(d^m)(\alpha,d-\beta)}=K_{(d^n)(\beta,d-\alpha)}.
\]
\end{corollary}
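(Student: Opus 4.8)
The plan is to read the three displayed identities established immediately before the statement and simply chain them together; once the left-hand sum is identified as a cardinality, the whole corollary becomes a bookkeeping exercise whose only substantive ingredients are the bijection $\theta$ of Theorem \ref{bijection on SSYT} and two routine invariances of Kostka numbers. So I would treat this as an assembly step rather than a fresh argument.

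First I would give the combinatorial reading of the left-hand side. For a fixed shape $\lambda\vdash N$ with $\lambda\subseteq(d^m)$, the factor $K_{\lambda\alpha}$ counts SSYTs $P$ of shape $\lambda$ and type $\alpha$, and $K_{\lambda\beta}$ counts SSYTs $Q$ of shape $\lambda$ and type $\beta$; summing the products over all admissible $\lambda$ counts exactly the pairs in $\mathfrak{S}_{d}(\alpha,\beta)$, so $\sum_{\lambda\vdash N;\lambda\subseteq(d^m)}K_{\lambda\alpha}K_{\lambda\beta}=|\mathfrak{S}_{d}(\alpha,\beta)|$. Applying the bijection $\theta$ of Theorem \ref{bijection on SSYT} then gives $|\mathfrak{S}_{d}(\alpha,\beta)|=|\mathfrak{T}_{d}(\alpha,\beta)|=K_{(d^m)(\alpha,d-\bar{\beta})}$, and since $d-\bar{\beta}=(d-\beta_m,\ldots,d-\beta_1)$ is merely a reordering of $d-\beta$, the standard fact recalled in the introduction that $K_{\lambda\mu}$ is independent of the order of the entries of $\mu$ upgrades this to $K_{(d^m)(\alpha,d-\beta)}$. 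That is the first claimed equality.

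For the second equality I would use the $P\leftrightarrow Q$ symmetry: swapping the two tableaux in each pair is a bijection from $\mathfrak{S}_{d}(\alpha,\beta)$ onto $\mathfrak{S}_{d}(\beta,\alpha)$, after which $\theta$ applied to $\mathfrak{S}_{d}(\beta,\alpha)$ yields $K_{(d^n)(\beta,d-\bar{\alpha})}=K_{(d^n)(\beta,d-\alpha)}$ by the same reordering invariance. The one place needing care — and the main (though still routine) obstacle — is matching the ambient rectangles: $\mathfrak{S}_{d}(\alpha,\beta)$ imposes $\lambda\subseteq(d^m)$ while $\mathfrak{S}_{d}(\beta,\alpha)$ imposes $\lambda\subseteq(d^n)$. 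I would resolve this by noting that every shape $\lambda$ occurring in the sum admits a SSYT of type $\beta$, hence has at most $m$ rows, so the extra rows allowed by $(d^n)$ are never used; since $m\le n$, the conditions $\lambda\subseteq(d^m)$ and $\lambda\subseteq(d^n)$ pick out the same set of shapes, giving $|\mathfrak{S}_{d}(\alpha,\beta)|=|\mathfrak{S}_{d}(\beta,\alpha)|$. Chaining the three identities then establishes the corollary, with the hypothesis $m\le n$ entering solely through this row-count comparison.
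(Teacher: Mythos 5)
Your proposal is correct and follows essentially the same route as the paper: identify the sum with $|\mathfrak{S}_{d}(\alpha,\beta)|$, apply the bijection $\theta$ of Theorem \ref{bijection on SSYT} to both $\mathfrak{S}_{d}(\alpha,\beta)$ and $\mathfrak{S}_{d}(\beta,\alpha)$, and use reordering invariance of Kostka numbers plus the observation that shapes admitting an SSYT of type $\beta$ have at most $m$ rows (so the $(d^m)$ versus $(d^n)$ constraints agree). The paper records exactly these steps, including the same remark about matching the ambient rectangles.
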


Recall that RSK correspondence is a bijection sending each generalized permutation of $\alpha\to \beta$ to a pair of SSYTs of the same shape and types $\alpha$ and $\beta$ respectively, i.e., if $d=N$,
\[
|S_{\alpha\beta}|=|\mathfrak{S}_{N}(\alpha,\beta)|=\sum_{\lambda\vdash N}K_{\lambda\alpha}K_{\lambda\beta}=K_{( N^{m})(\alpha,N-\beta)}=K_{(N^n)(\beta,N-\alpha)},
\]
which proves Theorem \ref{Kostka theorem}.

\section{Applications}
In this section, we will give some applications of previous results. Let $S_{\alpha\beta}^k$ be the set of generalized permutations $\tau\in S_{\alpha\beta}$ whose second row $\tau_{11}\tau_{12}\ldots$ has no weakly increasing subsequence of length $k$, i.e., no index sets $1\leq i_{1}\le i_{2}\le\cdots\le i_{k}$ and $j_{1},j_{2},\ldots ,j_{k}$ satisfy $\tau_{i_{1}j_{1}}\le\tau_{i_{2}j_{2}}\le\cdots\le\tau_{i_{k}j_{k}}$. When $\beta=(1^{\sum\alpha_i})$, each member of $S_{\alpha\beta}^{k}$ is a block-ascending permutation which was first studied in \cite{Lewis2011}.
\begin{theorem}\label{L2011}{\rm\cite{Lewis2011}} If $N=kn$, there are two bijections between
\begin{itemize}
  \item $ S_{(k^{n}) ( 1^{N})}^{k+1}$ and the set of SYTs of shape $( k^{n})$,
  \item $ S_{( (k-1)^{n}) ( 1^{N-n})}^{k+1}$ and the set of SYTs of shape $( k^{n})$.
\end{itemize}
By the hook length formula, we have
\[
|S_{(k^{n}) ( 1^{N})}^{k+1}|=|S_{( (k-1)^{n}) ( 1^{N-n})}^{k+1}|=f^{(k^n)}.
\]
\end{theorem}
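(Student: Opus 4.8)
\emph{Plan.} The idea is to realize both bijections as one and the same composite: apply the RSK correspondence to $\tau$, then apply the map $\theta$ of Theorem~\ref{bijection on SSYT} specialized to $d=k$, and check that in each of the two cases the output is forced to be a \emph{standard} Young tableau of shape $(k^n)$. The nontrivial engine is Theorem~\ref{bijection on SSYT}, which is already available; what remains is to translate the avoidance condition defining $S^{k+1}$ into a shape constraint and then to run a short bookkeeping computation.

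First I would record the general reduction. Fix compositions $\alpha,\beta$ of $M$ and apply RSK to $\tau\in S_{\alpha\beta}$, obtaining a pair $(P,Q)$ of common shape $\lambda\vdash M$ with $\{\type(P),\type(Q)\}=\{\alpha,\beta\}$. By Schensted's theorem the length of the longest weakly increasing subsequence of the second row of $\tau$ equals $\lambda_1$; hence $\tau\in S^{k+1}_{\alpha\beta}$ precisely when $\lambda_1\le k$. Moreover the types force $l(\lambda)\le\min\{l(\alpha),l(\beta)\}$, so the single inequality $\lambda_1\le k$ is equivalent to $\sh(P)=\sh(Q)\subseteq(k^{\,l})$ for the relevant number of rows $l$. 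Thus RSK restricts to a bijection from $S^{k+1}_{\alpha\beta}$ onto $\mathfrak{S}_k(\alpha,\beta)$ (the swap symmetry $|\mathfrak{S}_k(\alpha,\beta)|=|\mathfrak{S}_k(\beta,\alpha)|$ lets me put the $(1^{\cdot})$-typed tableau as the first argument). Composing with $\theta$ of Theorem~\ref{bijection on SSYT} then yields
\[
\bigl|S^{k+1}_{\alpha\beta}\bigr|=\bigl|\mathfrak{T}_k(\alpha,\beta)\bigr|=K_{(k^{m})(\alpha,\,k-\beta)},\qquad m=l(\beta),
\]
and likewise $=K_{(k^{n})(\beta,\,k-\alpha)}$ with $n=l(\alpha)$ after using the other ordering.

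Then I would simply substitute, tracking the complementary type $k-\bar\beta$. For the first item, $\alpha=(k^n)$ and $\beta=(1^{N})$ with $N=kn$: here the tableau recording the second row is standard, and feeding the pair into $\theta$ with $d=k$ and the standard tableau first produces a tableau of shape $(k^{\,l((k^n))})=(k^n)$ and type $\bigl((1^{N}),\,k-\overline{(k^n)}\bigr)=\bigl((1^{N}),0^{n}\bigr)=(1^{kn})$, i.e.\ an SYT of shape $(k^n)$. For the second item, $\alpha=((k-1)^n)$ and $\beta=(1^{N-n})$ with $N-n=(k-1)n$: now $\theta$ outputs shape $(k^n)$ and type $\bigl((1^{(k-1)n}),\,k-\overline{((k-1)^n)}\bigr)=\bigl((1^{(k-1)n}),1^{n}\bigr)=(1^{kn})$, again an SYT of shape $(k^n)$. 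Invariance of Kostka numbers under reordering of the type gives in both cases $\bigl|S^{k+1}_{\cdot}\bigr|=K_{(k^n)(1^{kn})}=f^{(k^n)}$, and the hook length formula supplies the closed form.

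\emph{Main obstacle.} The one genuinely delicate point is the reduction: I must confirm that $S^{k+1}$ collapses to the single inequality $\lambda_1\le k$ and that this is exactly the shape constraint $\subseteq(k^{\,\cdot})$ built into $\mathfrak{S}_k$, with \emph{no} spurious bound on $l(\lambda)$; this is where I use that the recording tableau has entries in $\{1,\dots,n\}$, so $l(\lambda)\le n$ holds automatically. After that the only thing to verify is that $k-\bar\beta$ degenerates to $0^n$ in the first case and to $1^n$ in the second, which is what forces the target to be a standard tableau of shape $(k^n)$. As an independent safeguard for the first item I would also note the direct RSK argument: $K_{\lambda(k^n)}>0$ requires $\lambda\trianglerighteq(k^n)$, which together with $\lambda_1\le k$ forces $\lambda=(k^n)$, and since $K_{(k^n)(k^n)}=1$ the recording tableau is uniquely determined, so $\tau\mapsto P$ is already the asserted bijection; this cross-checks against any orientation error in the $\theta$-computation.
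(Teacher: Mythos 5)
Your proposal is correct and follows essentially the same route the paper itself takes: the paper states this theorem as a citation of Lewis but recovers it in Section~5 by exactly your composite of RSK (translating the $S^{k+1}$ condition into the shape constraint $\lambda_1\le k$ via Schensted) with the merging map $\theta$ of Theorem~\ref{bijection on SSYT}, i.e.\ Corollary~\ref{cor-6} specialized to $(\alpha,\beta)=((k^n),(1^{N}))$ and $(((k-1)^n),(1^{N-n}))$. Your bookkeeping of the complementary types $(0^n)$ and $(1^n)$, and the direct cross-check for the first item via $K_{\lambda(k^n)}>0\Rightarrow\lambda=(k^n)$, are both sound.
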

His proof is a revisional RSK correspondence which merges a pair $(P,Q)$ of SYTs obtained by RSK correspondence from a block-ascending permutation to a SYT of shape $( k^{n})$. Analogous to Lewis's construction, we \cite{Mei} obtained the following extension.
\begin{theorem}\label{M2017}
{\rm \cite{Mei}}
Let $\alpha=(\alpha_{1},\ldots,\alpha_{n})$ be a composition of $N$ and $\alpha_i\in \{k-1,k\}$ for all $i\in [n]$. There is a bijection between $ S^{k+1}_{\alpha(1^{N})}$ and the set of SYTs of shape $ (k^{n})$. Then by the hook length formula, the cardinality
\[
|S^{k+1}_{\alpha(1^{N})}|=f^{(k^{n})}
\]
is independent of the choice of $\alpha_i\in \{k-1,k\}$ for all $i\in [n]$.
\end{theorem}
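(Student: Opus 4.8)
The plan is to realize the desired bijection as a composition of three maps: the RSK correspondence, the map $\theta$ of Theorem~\ref{bijection on SSYT} with $d=k$, and a final relabelling that is forced precisely by the hypothesis $\alpha_i\in\{k-1,k\}$.

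First I would feed a generalized permutation $\tau\in S^{k+1}_{\alpha(1^N)}$ into RSK. Since $\beta=(1^N)$, the second row of $\tau$ is an honest permutation of $[N]$, so RSK produces a common shape $\lambda\vdash N$ together with an SYT $Q$ of type $(1^N)$ and an SSYT $P$ of type $\alpha$ with entries in $[n]$. The key input is Schensted's theorem in its two-line-array form: $\lambda_1$ equals the length of a longest weakly increasing subsequence of $\tau$, where a weakly increasing subsequence is a choice of columns whose top and bottom coordinates both weakly increase. This is exactly the block/position condition defining $S^{k+1}_{\alpha\beta}$, so $\tau\in S^{k+1}_{\alpha(1^N)}$ iff $\lambda_1\le k$. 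On the other hand, since $P$ is column-strict with entries in $[n]$, every column of $\lambda$ has at most $n$ cells, so $l(\lambda)\le n$ automatically. The two bounds together give $\lambda\subseteq(k^n)$, and hence RSK restricts to a bijection between $S^{k+1}_{\alpha(1^N)}$ and $\mathfrak{S}_k\big((1^N),\alpha\big)$, where the pair is read as $(Q,P)$ with $\type(Q)=(1^N)$ and $\type(P)=\alpha$.

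Next I would apply $\theta$ of Theorem~\ref{bijection on SSYT} with $d=k$ and input types $(1^N)$ and $\alpha$, yielding a bijection onto $\mathfrak{T}_k\big((1^N),\alpha\big)$, the set of SSYTs of shape $(k^n)$ and type $\big((1^N),\,k-\bar\alpha\big)$. Here the hypothesis does all the work: because each $\alpha_i\in\{k-1,k\}$, the composition $k-\bar\alpha$ is a $0/1$ vector, so the full type $\big((1^N),k-\bar\alpha\big)$ has every multiplicity at most $1$ and uses exactly $N+\sum_{i}(k-\alpha_i)=nk$ distinct values. Consequently each such $T$ is a filling of $(k^n)$ by $nk$ distinct values that is weakly increasing along rows and strictly increasing down columns, i.e.\ a standard filling on a fixed value set; relabelling those values by rank gives a bijection onto all SYTs of shape $(k^n)$. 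Composing the three bijections produces the bijection asserted in the theorem, and the hook length formula then gives $|S^{k+1}_{\alpha(1^N)}|=f^{(k^n)}$, which is manifestly independent of the choice of the $\alpha_i$. If one prefers an explicit construction, $\theta$ may be replaced throughout by Algorithm~\ref{algorithm}, making the merge of $Q$ and $P$ into the rectangle fully constructive.

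I expect the main obstacle to lie in the first step rather than the last. One must state and invoke the two-line-array version of Schensted's theorem with care, verifying that its ``longest weakly increasing subsequence'' coincides with the defining condition of $S^{k+1}_{\alpha\beta}$, and that the simultaneous estimates $\lambda_1\le k$ and $l(\lambda)\le n$ are exactly what confine $\lambda$ inside $(k^n)$. Once $\lambda\subseteq(k^n)$ is secured, the remaining steps are essentially formal: $\theta$ is already a bijection by Theorem~\ref{bijection on SSYT}, and the $0/1$ nature of $k-\bar\alpha$ makes the standardization and the independence from $\alpha$ transparent.
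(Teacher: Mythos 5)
Your proposal is correct and follows essentially the same route the paper takes: it derives Theorem~\ref{M2017} as the special case $\beta=(1^N)$ of Corollary~\ref{cor-6}, i.e.\ RSK restricted to $S^{k+1}_{\alpha(1^N)}$ landing in $\mathfrak{S}_k\bigl((1^N),\alpha\bigr)$, followed by the merge map $\theta$ of Theorem~\ref{bijection on SSYT} with $d=k$, with the hypothesis $\alpha_i\in\{k-1,k\}$ making the resulting type a $0/1$ vector so the target tableaux standardize to SYTs of shape $(k^n)$. The only cosmetic discrepancy is that you swap the usual roles of $P$ and $Q$ (the insertion tableau built from the bottom row has type $\beta=(1^N)$, the recording tableau has type $\alpha$), which does not affect the argument.
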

Later, Chen \cite{Chen} proved that for any composition $\alpha$ of $N$, the cardinality of $S^{k+1}_{\alpha(1^{N})}$ is independent of the order of entries of $\alpha$. As byproducts, he gave a direct bijection between $S^{k+1}_{\alpha(1^{N})}$ and $S^{k+1}_{(k^n)(1^{N})}$  for any $\alpha\in \{k-1,k\}^n$ without RSK correspondence involved,  and proved that the cardinality of $S^{k+1}_{\alpha( 1^{N})}$ is Schur-concave, i.e., if $\alpha=(\alpha_{1},\ldots,\alpha_{n})$ and $\beta=(\beta_{1},\ldots,\beta_{m})$ are partitions of $N$ with $\alpha_{1}+\cdots+\alpha_{i}\geq \beta_{1}+\cdots+\beta_{i}$ for all $i\geq 1$ under the assumption of $\alpha_i=\beta_j=0$ for $i>n$ and $j>m$,
\[
| S^{k+1}_{\alpha( 1^{N})}|\leq |S^{k+1}_{\beta( 1^{N})}|.
\]

If $\tau\in S^{k+1}_{\alpha\beta}$ and $\tau\xrightarrow{RSK}(P,Q)$, it is well known that the first row of $\sh(P)=\sh(Q)$ is no longer than $k$. Indeed, RSK correspondence gives a bijection between $S^{k+1}_{\alpha\beta}$ and $\mathfrak{S}_{k}(\beta,\alpha)$. The following result recovers Chen's Schur-concavity \cite{Chen} of $|S^{k+1}_{\alpha(1^{N})}|$ on the partition $\alpha$ and Theorem \ref{M2017} by taking $\alpha\in \{k-1,k\}^n$ and $\beta=(1^N)$.
\begin{corollary}\label{cor-6}
Let  $\alpha$ and $\beta$ be two compositions of $N$. If $k\in \Bbb{N}$, then
\[
|S^{k+1}_{\alpha\beta}|=|S^{k+1}_{\beta\alpha}|=\sum_{\lambda\vdash N;\lambda\subseteq (k^m)}K_{\lambda\alpha}K_{\lambda\beta}
=K_{(k^m)(\alpha,k-\beta)}
\]
is independent of the order of entries of $\alpha$ and $\beta$. Moreover, $|S^{k+1}_{\alpha\beta}|$ is Schur-concave on partitions $\alpha$ and $\beta$. In particular,
\[
|S^{k+1}_{\alpha\beta}|\le |S_N(12\cdots (k+1))|,
\]
where $S_N(12\cdots (k+1))$ is the set of all $12\cdots (k+1)$-avoiding permutations on $[N]$.
\end{corollary}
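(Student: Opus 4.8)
The plan is to assemble the bijections already exhibited immediately before the statement and then to read off the order-independence and both Schur-concavities from Lemma~\ref{monotonicity-lemma} applied termwise. First I would record the relevant bijections. As recalled just above, if $\tau\in S^{k+1}_{\alpha\beta}$ and $\tau\xrightarrow{RSK}(P,Q)$, then by the Greene--Schensted theorem the longest weakly increasing subsequence of the second row of $\tau$ equals the length $\lambda_1$ of the first row of the common shape $\lambda=\sh(P)=\sh(Q)$; hence $\tau\in S^{k+1}_{\alpha\beta}$ precisely when $\lambda_1\le k$. Since the insertion tableau $P$ has type $\beta$ and the recording tableau $Q$ has type $\alpha$, RSK restricts to a bijection $S^{k+1}_{\alpha\beta}\to\mathfrak{S}_k(\beta,\alpha)$, giving
\[
|S^{k+1}_{\alpha\beta}|=|\mathfrak{S}_k(\beta,\alpha)|=\sum_{\lambda\vdash N;\,\lambda\subseteq(k^m)}K_{\lambda\alpha}K_{\lambda\beta}.
\]
The right-hand side is symmetric in $\alpha$ and $\beta$ and depends only on the multisets of their entries, which yields $|S^{k+1}_{\alpha\beta}|=|S^{k+1}_{\beta\alpha}|$ and the independence of the order of the entries. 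Feeding $d=k$ into Theorem~\ref{bijection on SSYT} and the displayed identity $|\mathfrak{S}_d(\alpha,\beta)|=K_{(d^m)(\alpha,d-\beta)}$ then rewrites this sum as $K_{(k^m)(\alpha,k-\beta)}$, completing the displayed chain.

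Next I would deduce the two Schur-concavities from the sum formula rather than from the single Kostka number, since the sum cleanly isolates the dependence on $\alpha$ and on $\beta$ separately. If $\alpha$ and $\alpha'$ are partitions of $N$ with $\alpha$ dominating $\alpha'$, then Lemma~\ref{monotonicity-lemma} gives $K_{\lambda\alpha}\le K_{\lambda\alpha'}$ for every $\lambda$; multiplying by the nonnegative factor $K_{\lambda\beta}$ and summing over $\lambda\subseteq(k^m)$ yields $|S^{k+1}_{\alpha\beta}|\le|S^{k+1}_{\alpha'\beta}|$. The identical argument in the other factor gives concavity in $\beta$.

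Finally, for the comparison with pattern-avoiding permutations, I would identify $S_N(12\cdots(k+1))$ with the specialization $\alpha=\beta=(1^N)$. When $\alpha=\beta=(1^N)$ a generalized permutation is an ordinary permutation, and since all entries are distinct, having no weakly increasing subsequence of length $k+1$ is the same as avoiding the increasing pattern $12\cdots(k+1)$; thus $S_N(12\cdots(k+1))=S^{k+1}_{(1^N)(1^N)}$, and the formula above gives $|S_N(12\cdots(k+1))|=\sum_{\lambda_1\le k}(f^{\lambda})^2$ with $f^{\lambda}=K_{\lambda(1^N)}$. Because $|S^{k+1}_{\alpha\beta}|$ is unchanged under reordering the entries of $\alpha$ and $\beta$, I may assume $\alpha$ and $\beta$ are partitions; every partition of $N$ dominates $(1^N)$, so Lemma~\ref{monotonicity-lemma} yields $K_{\lambda\alpha}\le K_{\lambda(1^N)}$ and $K_{\lambda\beta}\le K_{\lambda(1^N)}$ for each $\lambda$, and a termwise comparison of the two sums gives $|S^{k+1}_{\alpha\beta}|\le|S^{k+1}_{(1^N)(1^N)}|=|S_N(12\cdots(k+1))|$.

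The main obstacle is conceptual rather than computational: one must be scrupulous about the direction of the dominance order, since Lemma~\ref{monotonicity-lemma} is antitone (the denser type gives the \emph{smaller} Kostka number), and about the bookkeeping that lets the single index set $\{\lambda\vdash N:\lambda_1\le k\}$ govern both sums, using that $K_{\lambda\mu}=0$ whenever $\lambda$ has more rows than $\mu$ has parts, so that the extra shapes appearing for $(1^N)$ contribute only nonnegatively. The translation of the subsequence condition into the shape constraint $\lambda_1\le k$ via Greene--Schensted is the one genuinely external input, but it is exactly the fact recalled immediately before the statement.
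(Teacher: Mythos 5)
Your proof is correct and follows the route the paper intends: RSK restricted by the Greene--Schensted shape condition $\lambda_1\le k$ gives the bijection with $\mathfrak{S}_k(\beta,\alpha)$, Theorem~\ref{bijection on SSYT} with $d=k$ converts the sum into the single Kostka number, and Lemma~\ref{monotonicity-lemma} applied termwise (with the correct antitone direction, and using that every partition of $N$ dominates $(1^N)$) yields both Schur-concavities and the final comparison with $S_N(12\cdots(k+1))$. No gaps.
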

From Corollary \ref{cor-6}, taking $\alpha=1^N$ and $k=2$, we have
\[
|S^3_{( 1^{N})\beta}|=K_{(2^m)(1^N,2-\beta)}.
\]
\begin{corollary}
Let $\alpha=(\alpha_1,\ldots,\alpha_n)$ be a composition of $N$ with $\alpha_i>0$. If $\alpha\in \{1,2\}^n$, then the number of permutations on the multiset $[\alpha]$ which contain no weakly increasing subsequence of length $3$ is the $n$-th Catalan number, i.e.,
\begin{eqnarray*}
|S_{( 1^{N})\alpha}^3|=\left\{\begin{array}{lcl}C_{n}\quad\quad~\text{if}~~ \alpha\in \{1,2\}^n,\\ 0\quad\quad\quad\text{otherwise}.\end{array}\right.
\end{eqnarray*}
\end{corollary}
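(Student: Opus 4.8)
The plan is to read off the result from the identity $|S^3_{(1^N)\alpha}|=K_{(2^n)(1^N,2-\alpha)}$ displayed just before the statement, which is the specialization of Corollary \ref{cor-6} to source composition $1^N$ and $k=2$ (here the target $\alpha$ has $n$ parts, so $m=n$ and $2-\alpha=(2-\alpha_1,\ldots,2-\alpha_n)$). Everything then reduces to evaluating a single Kostka number whose shape is the $n\times 2$ rectangle $(2^n)$, and the two branches of the claimed formula correspond to when this number vanishes and when it equals $C_n$.

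First I would dispose of the case $\alpha\notin\{1,2\}^n$. Since each $\alpha_i>0$ by hypothesis, $\alpha\notin\{1,2\}^n$ forces some $\alpha_i\ge 3$, and then in every arrangement of the multiset $[\alpha]$ the letter $i$ occurs at least three times, producing a weakly increasing subsequence $iii$ of length $3$; hence no arrangement lies in $S^3_{(1^N)\alpha}$ and the count is $0$. This is consistent with the formula, since $2-\alpha_i\le -1$ is then negative and the Kostka number $K_{(2^n)(1^N,2-\alpha)}$ vanishes under the convention used in Lemma \ref{key-lemma}. This matches the second branch.

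For the main case $\alpha\in\{1,2\}^n$, I would carry out a short bookkeeping computation on the type. Write $a=\#\{i:\alpha_i=1\}$ and $b=\#\{i:\alpha_i=2\}$, so that $a+b=n$ and $N=a+2b$. Then $2-\alpha$ has exactly $a$ entries equal to $1$ and $b$ entries equal to $0$. Because the Kostka number is invariant under reordering its type and is unchanged by deleting $0$'s, the type $(1^N,2-\alpha)$ may be replaced by $1^{N+a}$; and since the shape $(2^n)$ has $2n$ boxes one checks the consistency identity $N+a=(a+2b)+a=2(a+b)=2n$. Therefore $K_{(2^n)(1^N,2-\alpha)}=K_{(2^n)(1^{2n})}=f^{(2^n)}$, the number of standard Young tableaux of the $n\times 2$ rectangle.

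It remains to identify $f^{(2^n)}$ with $C_n$, which follows from the classical interpretation recorded in (\ref{Catalanformula}): conjugation of tableaux gives $f^{(2^n)}=f^{(n^2)}=C_n$, since $(2^n)$ and $(n^2)$ are transpose shapes and the number of standard Young tableaux is preserved under conjugation. Combining the two cases yields the stated formula. The only genuinely delicate point is the bookkeeping of the third step — in particular recognizing that the $0$-entries of $2-\alpha$ drop out and that the leftover multiplicity of $1$'s is exactly $2n$ — after which the Catalan evaluation is immediate; I do not anticipate any substantial obstacle.
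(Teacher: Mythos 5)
Your proposal is correct and follows essentially the same route as the paper, which derives the corollary directly from the displayed specialization $|S^3_{(1^N)\alpha}|=K_{(2^n)(1^N,2-\alpha)}$ of Corollary \ref{cor-6}; you simply make explicit the evaluation of that Kostka number (dropping zero parts, reordering the type to $1^{2n}$, and identifying $K_{(2^n)(1^{2n})}=f^{(2^n)}=f^{(n^2)}=C_n$) together with the easy vanishing case when some $\alpha_i\ge 3$.
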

Let $P$ and $Q$ be two SSYTs of the same shape $( n^{2})$ and types $( 2^{n})$ and $(1^{2n})$ respectively. If $(P,Q)\xrightarrow{RSK} \tau$, it follows from RSK correspondence \cite{Stanley-Vol2} that
\[
\tau\in S_{( 1^{2n})( 2^{n})}(321)\cap S^{n+1}_{( 1^{2n})( 2^{n})}.
\]
 From Theorem \ref{generalized Riordan}, the number of such pairs $(P,Q)$ is $C_{n}\times R_{n}$.
\begin{corollary}\label{Catalan-Riordan}The number of permutations on the multiset $\{1,1,2,2,\ldots,n,n\}$ which avoid the pattern $321$ and contain no weakly increasing subsequence of length $n$ is $C_nR_n$, i.e.,
\[
C_{n}R_{n}=|S_{( 1^{2n})( 2^{n})}(321)\cap S^{n+1}_{( 1^{2n})( 2^{n})}|.
\]
\end{corollary}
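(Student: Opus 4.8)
The plan is to evaluate the intersection directly through the RSK correspondence recalled in Section~4, which is a bijection between $S_{(1^{2n})(2^n)}$ and the pairs $(P,Q)$ of SSYTs sharing a common shape $\lambda\vdash 2n$, with $P$ the insertion tableau (so $\type(P)=(2^n)$) and $Q$ the recording tableau (so $\type(Q)=(1^{2n})$). The discussion preceding the statement already produces, from each such pair of shape $(n^2)$, a $\tau$ lying in $S_{(1^{2n})(2^n)}(321)\cap S^{n+1}_{(1^{2n})(2^n)}$, and there are exactly $C_nR_n$ such pairs; what is left is the converse, that no shape other than $(n^2)$ can arise from a $\tau$ in this intersection. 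Thus the core task is to translate the two conditions defining the intersection into constraints on $\lambda$ and to check that jointly they force $\lambda=(n^2)$.

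First I would apply the Schensted--Greene description of the RSK shape to the word $v=\tau_{1}\tau_{2}\cdots\tau_{2n}$ given by the second row of $\tau$; since $\alpha=(1^{2n})$ makes every block a singleton, the subsequences in the definitions of $321$-avoidance and of $S^{n+1}$ are exactly ordinary subsequences of $v$. Under the SSYT row-insertion, $\lambda_1$ is the length of the longest weakly increasing subsequence of $v$ and $\ell(\lambda)$ is the length of the longest strictly decreasing subsequence. Consequently $\tau$ avoids $321$ (no strictly decreasing subsequence of length $3$) exactly when $\ell(\lambda)\le 2$, and $\tau\in S^{n+1}_{(1^{2n})(2^n)}$ (no weakly increasing subsequence of length $n+1$) exactly when $\lambda_1\le n$.

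Next I would combine the two constraints: a partition $\lambda\vdash 2n$ with at most two parts and $\lambda_1\le n$ satisfies $2n=\lambda_1+\lambda_2\le 2\lambda_1\le 2n$, which forces equality and hence $\lambda=(n,n)$. Therefore RSK restricts to a bijection between $S_{(1^{2n})(2^n)}(321)\cap S^{n+1}_{(1^{2n})(2^n)}$ and the pairs $(P,Q)$ of shape $(n^2)$ with $\type(P)=(2^n)$ and $\type(Q)=(1^{2n})$. The number of such pairs factors as $K_{(n^2)(2^n)}\cdot K_{(n^2)(1^{2n})}$, which is $R_n\cdot C_n$ by the Riordan identity $R_n=K_{(n^2)(2^n)}$ proved earlier and the classical Catalan identity $C_n=K_{(n^2)(1^{2n})}$ (here $Q$ is simply a standard Young tableau of shape $(n^2)$). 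This gives the claimed equality.

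The one delicate point is the Schensted--Greene translation, and specifically keeping the weak/strict dichotomy correctly oriented: the $321$-avoidance must be attached to the \emph{strictly} decreasing statistic, which reads off the number of rows, while the forbidden increasing run must be attached to the \emph{weakly} increasing statistic, which reads off the first-row length. Once that orientation is fixed the shape is pinned to $(n^2)$ and the enumeration is immediate, the remainder being bookkeeping with the Kostka numbers already identified with $C_n$ and $R_n$ in the paper.
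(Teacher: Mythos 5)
Your proof is correct and follows essentially the same route as the paper, which establishes this corollary via the paragraph preceding it: RSK sends the intersection to pairs $(P,Q)$ of shape $(n^2)$ with types $(2^n)$ and $(1^{2n})$, counted by $K_{(n^2)(2^n)}K_{(n^2)(1^{2n})}=R_nC_n$. In fact your write-up is slightly more complete than the paper's, since you explicitly verify via Schensted's theorem the converse direction --- that the constraints $\ell(\lambda)\le 2$ and $\lambda_1\le n$ force $\lambda=(n,n)$ --- which the paper leaves implicit (note also that the corollary's prose ``length $n$'' should read ``length $n+1$'' to match $S^{n+1}$, as your argument correctly assumes).
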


If $\alpha=(\alpha_1,\ldots,\alpha_n)$ is a composition of $N$, it is known that the number of permutations on the multiset $[\alpha]$ is ${N\choose \alpha_1,\ldots,\alpha_n}$,
i.e.,
\[
|S_{(1^{N})\alpha}|={N\choose \alpha_1,\ldots,\alpha_n}.
\]
\begin{corollary}If $\alpha$ is a composition of $N$ and $f^\lambda$ the number of SYTs of shape $\lambda$, then
\[
\sum_{\lambda\vdash N}f^\lambda K_{\lambda\alpha}={N\choose \alpha_1,\ldots,\alpha_n}.
\]
\end{corollary}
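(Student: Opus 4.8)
The plan is to read both sides as counts of the same family of objects under the RSK correspondence. First I would identify the right-hand side: the multinomial coefficient ${N\choose \alpha_1,\ldots,\alpha_n}$ is exactly $|S_{(1^{N})\alpha}|$, the number of generalized permutations of $(1^N)\to\alpha$, as recorded in the display immediately preceding the corollary. Concretely, a member of $S_{(1^{N})\alpha}$ has top row $1\,2\cdots N$ with all blocks of size $1$, so the weak-increase condition within blocks is vacuous and its bottom row is an arbitrary arrangement of the multiset $[\alpha]$; there are precisely ${N\choose \alpha_1,\ldots,\alpha_n}$ such arrangements.

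Next I would invoke the RSK correspondence in the form used for Theorem \ref{Kostka theorem}: it is a bijection sending a generalized permutation of $(1^N)\to\alpha$ to a pair $(P,Q)$ of SSYTs of a common shape $\lambda\vdash N$ with $\type(P)=(1^{N})$ and $\type(Q)=\alpha$. Partitioning $S_{(1^{N})\alpha}$ according to the common shape $\lambda$ and counting the choices for $P$ and $Q$ independently yields
\[
|S_{(1^{N})\alpha}|=\sum_{\lambda\vdash N}K_{\lambda(1^{N})}\,K_{\lambda\alpha}.
\]

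The final step is the standard identification $K_{\lambda(1^{N})}=f^{\lambda}$: an SSYT of shape $\lambda$ and type $(1^{N})$ contains each of $1,2,\ldots,N$ exactly once and is weakly increasing along rows while strictly increasing down columns, hence is strictly increasing in both directions, i.e.\ precisely a standard Young tableau of shape $\lambda$. Substituting this identity into the displayed sum and equating with the value ${N\choose \alpha_1,\ldots,\alpha_n}$ from the first paragraph gives the claim. I do not expect a genuine obstacle here: the corollary is simply the specialization of $|S_{\alpha\beta}|=\sum_{\lambda}K_{\lambda\alpha}K_{\lambda\beta}$ from Theorem \ref{Kostka theorem} obtained by taking the first composition to be $(1^{N})$, the only point deserving explicit comment being the equality $K_{\lambda(1^{N})}=f^{\lambda}$.
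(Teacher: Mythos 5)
Your proposal is correct and follows exactly the route the paper intends: the corollary is the specialization of Theorem \ref{Kostka theorem} (via RSK) to $S_{(1^N)\alpha}$, combined with the identification $K_{\lambda(1^N)}=f^{\lambda}$ and the count $|S_{(1^N)\alpha}|={N\choose \alpha_1,\ldots,\alpha_n}$ stated immediately before the corollary. No gaps.
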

From the theory of symmetric functions \cite{Macdonald,Stanley-Vol2},
\[
h_{\lambda}=\sum_{\mu}(\sum_{\nu}K_{\nu\lambda}K_{\nu\mu})m_{\mu},
\]
where $h_{\lambda}$ is the complete symmetric function and $m_{\mu}$ the monomial symmetric function. Theorem \ref{Kostka theorem} implies the following result.
\begin{corollary}
If $\lambda\vdash N$ and  the length of $\lambda$ is $\ell(\lambda)$, then
\[
h_{\lambda}=\sum_{\mu\vdash N}K_{( N^{\ell(\lambda)})(\mu,N-\lambda)}m_{\mu},
\]
i.e., the transition matrix from the basis $(h_{\lambda})$ to the basis $(m_{\lambda})$ consists of Kostka numbers of rectangular shape.
\end{corollary}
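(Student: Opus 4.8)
The plan is to combine the displayed symmetric-function identity sitting just above the corollary with Theorem \ref{Kostka theorem}; no genuinely new combinatorics is required, only a careful matching of the two partitions $\mu,\lambda$ to the roles played by $\alpha,\beta$ in that theorem.

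First I would observe that both $(h_\lambda)_{\lambda\vdash N}$ and $(m_\mu)_{\mu\vdash N}$ are bases of the degree-$N$ homogeneous component of the ring of symmetric functions, so the coefficients expressing $h_\lambda$ in the monomial basis are exactly the entries of the transition matrix between the two bases. The quoted identity
\[
h_\lambda=\sum_{\mu\vdash N}\Big(\sum_{\nu\vdash N}K_{\nu\lambda}K_{\nu\mu}\Big)m_\mu,
\]
which comes from chaining $h_\lambda=\sum_\nu K_{\nu\lambda}s_\nu$ with $s_\nu=\sum_\mu K_{\nu\mu}m_\mu$, already exhibits each such entry as the inner sum $\sum_\nu K_{\nu\lambda}K_{\nu\mu}$. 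It therefore remains only to put this inner sum into closed form.

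The key step is to invoke Theorem \ref{Kostka theorem} with $\alpha=\mu$ and $\beta=\lambda$. Its middle equality reads $\sum_{\nu\vdash N}K_{\nu\alpha}K_{\nu\beta}=K_{(N^m)(\alpha,N-\beta)}$ with $m=\ell(\beta)$; substituting $\alpha=\mu$, $\beta=\lambda$ gives
\[
\sum_{\nu\vdash N}K_{\nu\mu}K_{\nu\lambda}=K_{(N^{\ell(\lambda)})(\mu,N-\lambda)},
\]
since the second argument $\lambda$ now dictates both the height $\ell(\lambda)$ of the rectangle and the trailing block $N-\lambda=(N-\lambda_1,\ldots,N-\lambda_{\ell(\lambda)})$ of the type, while the first argument $\mu$ enters the type unchanged. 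Feeding this back into the monomial expansion yields $h_\lambda=\sum_{\mu\vdash N}K_{(N^{\ell(\lambda)})(\mu,N-\lambda)}m_\mu$, which is exactly the asserted formula, and the interpretive statement about the transition matrix is then immediate.

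I expect the only real pitfall to be the bookkeeping of which partition controls the rectangle: in Theorem \ref{Kostka theorem} it is the second composition $\beta$ whose complement forms the tail of the type and whose length sets the number of rows, so I must take $\beta=\lambda$ rather than $\beta=\mu$. A useful consistency check is that the inner sum $\sum_\nu K_{\nu\mu}K_{\nu\lambda}$ is visibly symmetric in $\mu$ and $\lambda$; the second equality recorded at the end of Section 4, namely $K_{(N^m)(\alpha,N-\beta)}=K_{(N^n)(\beta,N-\alpha)}$, guarantees that the alternative choice $\alpha=\lambda$, $\beta=\mu$ produces the equal value $K_{(N^{\ell(\mu)})(\lambda,N-\mu)}$, so no inconsistency arises from the asymmetry of the rectangular shape.
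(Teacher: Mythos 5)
Your proposal is correct and follows exactly the route the paper intends: expand $h_\lambda$ in the monomial basis via the chained Kostka identity quoted just above the corollary, then apply Theorem \ref{Kostka theorem} with $\alpha=\mu$ and $\beta=\lambda$ to rewrite the inner sum $\sum_\nu K_{\nu\mu}K_{\nu\lambda}$ as $K_{(N^{\ell(\lambda)})(\mu,N-\lambda)}$. Your bookkeeping of which partition controls the rectangle height and the complemented tail is right, and the symmetry cross-check via $K_{(N^m)(\alpha,N-\beta)}=K_{(N^n)(\beta,N-\alpha)}$ is a sensible sanity check, though not needed for the proof.
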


\end{document}